%%%%%%%%%%%% LaTeX2e %%%%%%%%%%%%%%%%%%%%%%%%%%%
\documentclass[preprint,3p]{elsarticle}
%%%%%%%%%%%%%%%%%%%%%%%%%%%%%%%%%%%%%%%%%%%%%%
\journal{Journal of Approximation Theory}
%%%%%%%%% Packages %%%%%%%%%%%%%%%%%%%%%%%%%%%%%
\usepackage{amsmath,amssymb,latexsym,color}
\usepackage{amsthm}
\usepackage{dsfont}
\usepackage{pdfsync,color}
%%%%%%%%%%%%%% THEOREM Environments %%%%%%%%%%%%

\newtheorem{theorem}{Theorem}
\newtheorem{lemma}[theorem]{Lemma}

\newtheorem{proposition}{Proposition}
\newtheorem*{ack*}{Acknowledgements}
%%%%%%%%%%%% Commands %%%%%%%%%%%%%%%%%%%%%%%%%
\newcommand{\CC}{\mathds{C}}

\newcommand{\NN}{\mathds{N}}
\newcommand{\PP}{\mathds{P}}
\newcommand{\RR}{\mathds{R}}
\newcommand{\ZZ}{\mathds{Z}}
\newcommand{\LL}{\mathcal{L}}

\newcommand{\dsty}{\displaystyle}

\newcommand{\unifn}{\;\; {\mathop{\mbox{\Large $\rightrightarrows$}}_{n\to\infty}}\;\;}
\newcommand{\weak}{\;\;{\stackrel{\rm *}{\longrightarrow}}\;\;}

\begin{document}

\begin{frontmatter}

\title{Differential orthogonality: Laguerre and Hermite cases with applications.}

%% use optional labels to link authors explicitly to addresses:
\author[label1]{J. Borrego--Morell\fnref{grant1,grant2}}
\address[label1]{UNESP--Universidade Estadual Paulista, IBILCE, Brazil.}
\ead{jbmorell@gmail.com}

\author[label2]{H. Pijeira--Cabrera\fnref{grant2}}
\address[label2]{Universidad Carlos III de Madrid, Spain.}
\ead{hpijeira@math.uc3m.es}

\fntext[grant1]{Research partially supported by FAPESP of Brazil.}

\fntext[grant2]{Research partially supported  by Ministerio de Econom\'{\i}a y Competitividad of Spain, under grant MTM2012-36732-C03-01.}

\begin{abstract}
Let $\mu$ be a finite positive Borel measure supported  on $\RR$, $\LL[f] =xf''+(\alpha+1-x)f'$ with $\alpha>-1$, or $\LL[f] =\frac{1}{2}f''-xf'$, and $m$ a natural number. We study algebraic, analytic and
asymptotic properties of the sequence of monic polynomials $\{Q_n\}_{n>m}$ that satisfy
the orthogonality relations
$$
\int \LL[Q_n](x)x^kd\mu(x) = 0 \quad \mbox{for all} \quad  0 \leq k \leq  n-1.
$$
We also provide a fluid dynamics model for the zeros of these polynomials.

\end{abstract}

\begin{keyword}
orthogonal polynomials \sep ordinary differential operators \sep asymptotic analysis \sep weak star convergence \sep hydrodynamic.

\MSC[2010] 33C45 \sep  47E05 \sep  85A30

\end{keyword}

\end{frontmatter}

\section{Introduction}
\label{SecIntro}

Orthogonal polynomials with respect to a differential operator  were introduced in \cite{ApLoMa02}  as a  generalization of the notion of orthogonal polynomials. Analytic and algebraic properties of these classes of polynomials have been considered for some classes of first order differential operators in \cite{BePiMaUr11, PiBeUr10},  for a Jacobi differential operator in \cite{BorPij12}, and for differential operators of arbitrary order with polynomials coefficients in \cite{Bor12}.  In this paper,  we consider orthogonal polynomials with respect to a Laguerre or Hermite operator and a positive Borel measure  $\mu $   with unbounded support on $\RR$.

We denote by $\LL_L$ the Laguerre and by  $\LL_H$ the Hermite differential operators on the linear space $\PP$ of all polynomials, i.e. for all $ f \in \PP$ and $\alpha>-1$
\begin{eqnarray}
\label{Lag_DO}
\LL_L[f] & = & xf^{\prime \prime} + (1+\alpha -x) f^{\prime}= x^{-\alpha}\,e^{x} \left(x^{\alpha+1}\,e^{-x}\, f^{\prime}\right)^{\prime},  \\
\label{Her_DO}
\LL_H[f] & = &\frac{1}{2} f^{\prime \prime} -xf^{\prime}=\frac{1}{2} e^{x^2} \left(e^{-x^2}\, f^{\prime}\right)^{\prime}.
\end{eqnarray}

Each one of these second order differential operators has  a system of monic polynomials which are eigenfunctions of the operator and orthogonal with respect to a measure.  Let  $\{L^{\alpha}_n\}_{n=0}^{\infty}$ be the monic Laguerre polynomials with  $\alpha > -1$ and  $\{H_n\}_{n=0}^{\infty}$  the monic Hermite polynomials, then
$$
\begin{array}{cccc}
 \langle L^{\alpha}_n, L^{\alpha}_m \rangle_{L} & = & \dsty \int  L^{\alpha}_n(x)  L^{\alpha}_m(x) dw_{L}^{\alpha}(x)  & \left\{ \begin{array}{ccc}
                                                                           =0 & \mbox{if} & n \neq m, \\
                                                                           \neq 0 & \mbox{if}  & n=m, \\
                                                                         \end{array}\right. \\
 \langle H_n, H_m \rangle_{H} & = & \dsty \int  H_n(x)  H_m(x) dw_{H}(x) & \left\{ \begin{array}{ccc}
                                                                           =0 & \mbox{if} & n \neq m, \\
                                                                           \neq 0 & \mbox{if}  & n=m, \\
                                                                         \end{array}\right.
\end{array}$$
where $dw_{L}^{\alpha}(x)= x^{\alpha}\,e^{-x}dx$ and $dw_{H}(x)= e^{-x^2}dx$. In addition,
\begin{equation}\label{eigen_poly}
\LL_L[L^{\alpha}_n]=-n L^{\alpha}_n \quad \mbox{and} \quad \LL_H[H_n]=-n H_n.
\end{equation}

To unify the approach, we  will denote   by $\LL$  the Laguerre or Hermite differential operator ($\LL_L$ or $\LL_H$) in the sequel,  by  $dw$  the Laguerre or Hermite  measure ($dw_{L}^{\alpha}$ or $dw_{H}$), by $L_n$ the $n$th Laguerre or Hermite monic orthogonal polynomial ($L^{\alpha}_n$ or $H_n$) and by $\Delta$ the set $\RR_+$ or $\RR$, respectively. We will  refer to one  or the other depending on the case  we  are solving.

Let  $\mu$ be a finite positive Borel measure, supported  on $\Delta$  and  $\{P_n\}_{n=0}^{\infty}$ the corresponding  system of monic orthogonal polynomials, i.e.
\begin{equation}\label{OrthPoly_mu}
 \langle P_n,P_k \rangle_{\mu}=\int  P_n(x) P_k(x) d\mu(x) \left\{ \begin{array}{ll}
    \not =0 & \mbox{ if } n=k, \\
     = 0 & \mbox{ if }  n \neq k.  \\
   \end{array}\right.
\end{equation}

We say that  $Q_n$ is the $n$th monic orthogonal polynomial with respect to the pair  $(\LL, \mu)$  if $Q_n$ has degree $n$
 and
\begin{equation}\label{OrthDiff_01}
    \int \, \LL[Q_n] (x) \; x^k d\mu(x)=0 \quad
    \mbox{for all} \quad 0 \leq k \leq n-1,
\end{equation}
or, equivalently,
\begin{equation}\label{OrthDiff_02}
 \LL[Q_n]=\lambda_n \,P_n,
\end{equation}
where $\lambda_n=-n$.

 It was shown in \cite[\S2]{BorPij12} that  it is not always possible to guarantee the existence of a system of polynomials $\{Q_n\}_{n\in \ZZ_+}$ orthogonal with respect to the pair $(\LL^{(\alpha,\beta)},\mu)$, where $\LL^{(\alpha,\beta)}$ is the Jacobi differential operator and $\mu$ an arbitrary positive finite Borel measure. As will be shown later (cf. Propositions \ref{(Cor1)LH}  and \ref{Theo_EU}), a similar situation occurs for the case of Laguerre and Hermite operators.  Let   $m \in \NN$ be fixed, a fundamental role in the existence of infinite sequences of  polynomials $\{Q_n\}_{n>m}$ orthogonal with respect to the pair $(\LL,\mu)$ is  played by the class   $\mathcal{P}_m(\Delta)$  defined as the family of  finite positive Borel measures  $\mu$   supported on  $\Delta$  for which  there exist  a   polynomial $\rho$ of degree $m$, such that $\dsty \mu= \left(\rho \right)^{-1} w$.

If $\mu\in \mathcal{P}_m(\Delta)$ it is not difficult to see that if $n>m$, then
\begin{eqnarray}\label{PnLn}
   P_n(z) &=&  \sum_{k=0}^{m} b_{n,n-k} \;  L_{n-k}(z),\quad  b_{n,n-k}=\frac{1}{\tau_{n-k}} \int P_n(x) L_{n-k}(x) dw(x), \\
   \label{Lag-Herm_Norm2}
 \tau_n &=&\|L_n\|_{w}^2=\int L_n^2(x) dw(x)=\left\{\begin{array}{rl}
              n! \; \Gamma(n+\alpha+1) &, \mu\in\mathcal{P}_m(\RR_+), \\
              n! \sqrt{\pi}2^{-n}&, \mu\in\mathcal{P}_m(\RR),
            \end{array} \right.
\end{eqnarray}
and  from \eqref{OrthDiff_02} we obtain that   the monic polynomial of degree $n$, for $n>m$ defined by the formula
\begin{eqnarray}\label{Qhat_Def}
   \widehat{Q}_n(z) &=&  \sum_{k=0}^{m} \frac{\lambda_n}{\lambda_{n-k}}b_{n,n-k} \;  L_{n-k}(z),
\end{eqnarray}
is orthogonal with respect to $(\LL,\mu)$.

Notice that from the equivalence between relations   \eqref{OrthDiff_01}  and \eqref{OrthDiff_02}, the polynomial  $\widehat{Q}_n+c, c\in \CC$,  is orthogonal with respect to $(\LL,\mu)$ so that we do not have a unique monic orthogonal polynomial of degree $n$. We had a similar situation when we studied the orthogonality with respect to a Jacobi operator. A natural way  to define a unique sequence would be to consider  a sequence of complex numbers $\{\zeta_n\}_{n=m+1}^{\infty}$, and define  the sequence $\{Q_n\}_{n=m+1}^{\infty}$ satisfying  \eqref{OrthDiff_01}, as  the polynomial  solution of the initial value problem

\begin{equation}\label{IVP_nLH}\left\{\begin{array}{rcl}

                              \LL[y] & = & \lambda_n \,P_n, \quad n > m,\\

                               y(\zeta_n) &=& 0.

                             \end{array}\right.
\end{equation}
We say that $\{Q_n\}_{n=m+1}^{\infty}$ is the sequence of monic orthogonal polynomials with respect to the pair  $(\LL, \mu)$ such that  $Q_n(\zeta_n)=0$.

Notice  that the initial value problem \eqref{IVP_nLH} has the unique polynomial solution
\begin{equation}\label{(13)LH}
y(z)=Q_n(z)= \widehat{Q}_n(z) - \widehat{Q}_n(\zeta_n).
\end{equation}

In this paper, we study some analytic and algebraic properties of the sequence of orthogonal polynomials with respect to a Laguerre or Hermite differential operator. In order to study the asymptotic properties of the sequence of polynomials we shall normalize them with an adequate parameter.

Let $x_{n}$ be  the   modulus of the largest zero  of the $n$th orthogonal polynomial with respect to  $\mu$ (or $w$), from \cite[Lemma 11  with $\lambda=2$]{Rakh81}  for the Hermite case and  \cite[Coroll. (p. 191) with $\gamma=1$]{Rakh81}  for the Laguerre case,
we get
\begin{eqnarray}\label{asympzero}
\dsty \lim_{n\to\infty}c_n^{-1}x_n=1,
\end{eqnarray}
where $c_n$ is usually called Mhaskar-Rakhmanov-Saff constant, here with the closed expression
\begin{equation}\label{LargestZ-LH}
   c_{n}= \left\{ \begin{array}{rllcl}
                      4n & , & \mu \in \mathcal{P}_m(\RR_+) & \mbox{or}&  w(x)=x^{\alpha}e^{-x},x>0, \\
                      \sqrt{2n} & ,  &  \mu \in \mathcal{P}_m(\RR)& \mbox{or}&  w(x)= e^{-x^2},x\in \RR.
                    \end{array}
   \right.
\end{equation}
Throughout  this paper we denote the functions  $\varphi(z)=z+\sqrt{z^2-1}$ and  $\psi(z)=2z-1+2\sqrt{z(z-1)}$,  where the branch of each root is  selected from the condition $ \varphi(\infty)=\infty$ and $ \psi(\infty)=\infty $, respectively. Let $\Delta_c$ be the interval  $[0,1]$ in the Laguerre case and $[-1,1]$ in the Hermite case.   Let  $\mathfrak{P}_n(z)=c_n^{-n}P_n(c_nz)$  be  the normalized monic orthogonal polynomials with respect to a measure $\mu\in\mathcal{P}_m(\Delta)$.

To each generic polynomial $q_{n}$, let  $\dsty \mu_n={n^{-1}}\sum_{q_{n}(\omega)=0}\delta_{\omega}$ be the normalized root counting measure,  where $\delta_{\omega}$  is the Dirac measure  with mass $1$ at the point $\omega$. From \cite[Ths. 4 \& 4']{Rakh81} we find that the limit distribution $\nu_w$ of the zero counting measure of the normalized Laguerre  and Hermite polynomials is
$$
d\nu_w(t)=\begin{cases}
\dsty {2}{\pi^{-1}}\sqrt{\frac{1-t}{t}}dt,\quad t\in[0,1]& \text{Laguerre case,}\\
\dsty{2}{\pi^{-1}}\sqrt{1-t^2}dt,\quad t\in[-1,1] &  \text{Hermite case.}\\
\end{cases}
$$

From   \cite[Chs. III \& IV]{safftotik97} we have that
\begin{eqnarray}
\label{Pnrootc} \dsty \lim_{n  \rightarrow
\infty}\left|\mathfrak{P}_n(z)\right|^{\frac{1}{n}}&=&
\begin{cases}
 \frac{1}{e }\,\left|\psi(z)\right|\, e^{2\Re [1/\varphi(z)]}  & \mu \in \mathcal{P}_m(\RR_+),\\
 \frac{1}{2 \sqrt{e}} \,\left|\varphi(z)\right| \, e^{\Re{[z/\varphi(z)]}} &  \mu \in \mathcal{P}_m(\RR),
\end{cases}
\end{eqnarray}
uniformly on compact subsets $K \subset \CC \setminus \Delta_c$.

We are interested in asymptotic properties of the normalized monic  orthogonal polynomials with respect to a pair $(\LL,\mu)$ defined by
\begin{equation}\label{(13)LHnor}
\mathfrak{Q}_n(z)=\mathfrak{\widehat{Q}}_n(z)-\mathfrak{\widehat{Q}}_n(\zeta_n),
\end{equation}
where $\mathfrak{\widehat{Q}}_n(z)= c_n^{-n}\;\widehat{Q}_n(c_nz)$. For these polynomials we prove the followings results

\begin{theorem}\label{countingmeasureconv}

Let  $\mu \in \mathcal{P}_m(\Delta)$, where $m \in \NN$. Then:

\begin{itemize}

\item [a)] If $\nu_n,\sigma_n$ denote the root counting measure of ${\widehat{\mathfrak{Q}}}_n$ and ${\widehat{\mathfrak{Q}}}^{\prime}_n$ respectively then $\dsty \nu_n \weak \nu_{w}$ and $\dsty \sigma_n \weak \nu_{w}$
in the weak star sense.

\item [b)] The set of accumulation points of the zeros of $\left \{\mathfrak{\widehat{Q}}_n\right \}_{n=m+1}^{\infty}$ is $\Delta_c$.
\end{itemize}
\end{theorem}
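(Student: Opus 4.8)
The plan is to reduce everything to the $n$th-root and ratio asymptotics of the normalized polynomials and then invoke the standard bridge between $n$th-root asymptotics and weak-star convergence of zero counting measures. From \eqref{Qhat_Def}, and since $\lambda_n/\lambda_{n-k}=n/(n-k)$, the normalized polynomial factors as
$$
\widehat{\mathfrak{Q}}_n(z)=\mathfrak{L}_n(z)\,S_n(z),\qquad S_n(z)=\sum_{k=0}^{m}\frac{n}{n-k}\,b_{n,n-k}\,\frac{L_{n-k}(c_nz)}{L_n(c_nz)},
$$
where $\mathfrak{L}_n(z)=c_n^{-n}L_n(c_nz)$ is the normalized classical polynomial, which is monic of degree $n$ and nonvanishing on compact sets off $\Delta_c$ for large $n$ (its zeros normalize into a shrinking neighborhood of $\Delta_c$ by \eqref{asympzero}--\eqref{LargestZ-LH}). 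Applying \eqref{Pnrootc} to the classical weight $w$ gives $|\mathfrak{L}_n|^{1/n}\to F$ locally uniformly on $\CC\setminus\Delta_c$, where $F$ denotes the right-hand side of \eqref{Pnrootc}; this is, up to an additive constant, the logarithmic potential of $\nu_w$.

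The heart of the argument, and the step I expect to be the main obstacle, is the key lemma that $S_n\to T$ locally uniformly on compact subsets of $\overline{\CC}\setminus\Delta_c$ (including a neighborhood of infinity), where $T$ is holomorphic, $T(\infty)=1$, and $T$ is zero-free off $\Delta_c$. First I would establish the ratio asymptotics $L_{n-k}(c_nz)/L_n(c_nz)\to R_k(z)$, finite and nonvanishing, using the strong Plancherel--Rotach asymptotics of the Laguerre and Hermite polynomials under the Mhaskar--Rakhmanov--Saff scaling; here the fact that $c_n/c_{n-k}\to1$ is exactly what permits a single scale $c_n$ for all $k$. Next I would control the coefficients $b_{n,n-k}$, showing they remain bounded (indeed convergent), which follows from the relative asymptotics of $P_n$ with respect to $L_n$ for the polynomially modified measure $\mu=\rho^{-1}w$, as in the companion Jacobi treatment \cite{BorPij12}. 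Because $n/(n-k)\to1$, the resulting limit $T$ coincides with $\lim \mathfrak{P}_n/\mathfrak{L}_n$ obtained by the same computation applied to \eqref{PnLn}; and since $\mathfrak{P}_n,\mathfrak{L}_n$ are orthogonal polynomials with no zeros off $\Delta_c$, while $\mathfrak{P}_n/\mathfrak{L}_n\to T$ with $T(\infty)=1$, Hurwitz's theorem forces $T$ to be zero-free on $\CC\setminus\Delta_c$. The delicate point is that $n$th-root information alone does not preclude cancellation in the finite sum $S_n$; only the sharper ratio and relative asymptotics pin down the genuine nonvanishing limit.

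Granting the key lemma, part (a) follows quickly. Since $S_n\to T$ with $T$ finite and nonzero off $\Delta_c$, we get $\frac1n\log|S_n|\to0$ and hence $\frac1n\log|\widehat{\mathfrak{Q}}_n|\to\log F$ locally uniformly on $\CC\setminus\Delta_c$; moreover $S_n\to1$ near infinity forces all zeros of $\widehat{\mathfrak{Q}}_n$ into a fixed disk for large $n$ (a Rouch\'e argument using $T(\infty)=1$), so no mass escapes to infinity. As $\widehat{\mathfrak{Q}}_n$ then shares the $n$th-root limit of $\mathfrak{L}_n$, whose counting measure converges to $\nu_w$ by \cite{Rakh81}, the uniqueness of the unit measure with a prescribed logarithmic potential (cf.\ \cite{safftotik97}) yields $\dsty\nu_n\weak\nu_w$. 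For the derivative, differentiation gives $\dsty \frac1n\,\frac{\widehat{\mathfrak{Q}}_n'}{\widehat{\mathfrak{Q}}_n}=\int\frac{d\nu_n(t)}{z-t}\to C^{\nu_w}(z)$, where $C^{\nu_w}(z)=\int(z-t)^{-1}d\nu_w(t)$ is finite and nonzero off $\Delta_c$ up to a discrete set, so $\frac1n\log|\widehat{\mathfrak{Q}}_n'|\to\log F$ as well and $\dsty\sigma_n\weak\nu_w$.

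For part (b), the inclusion of $\Delta_c$ in the accumulation set is immediate from $\nu_n\weak\nu_w$ together with $\supp{\nu_w}=\Delta_c$: every neighborhood of a point of $\Delta_c$ carries positive $\nu_w$-mass and therefore contains zeros of $\widehat{\mathfrak{Q}}_n$ for all large $n$. For the reverse inclusion, fix $z_0\notin\Delta_c$ and a closed disk $D$ about $z_0$ disjoint from $\Delta_c$; on $D$ one has $\widehat{\mathfrak{Q}}_n/\mathfrak{L}_n=S_n\to T$ with $T$ nonvanishing, while $\mathfrak{L}_n$ has no zeros in $D$ for large $n$, so by Hurwitz $\widehat{\mathfrak{Q}}_n$ has no zeros in $D$ for large $n$. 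Hence no point off $\Delta_c$ is an accumulation point, and the accumulation set is exactly $\Delta_c$.
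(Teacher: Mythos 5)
Your overall architecture (factor $\widehat{\mathfrak{Q}}_n=\mathfrak{L}_n S_n$, show $S_n$ has a zero-free limit, then transfer the $n$th-root asymptotics and the zero localization of $\mathfrak{L}_n$ to $\widehat{\mathfrak{Q}}_n$) is coherent, but the proof you propose for your key lemma fails on both of its sub-claims, and these are exactly the steps you flagged as the heart of the matter. First, the ratios $L_{n-k}(c_nz)/L_n(c_nz)$ do \emph{not} converge to finite nonvanishing functions $R_k$: since $L_{n-k}(c_nz)/L_n(c_nz)=c_n^{-k}\,\mathfrak{L}_{n,n-k}(z)/\mathfrak{L}_n(z)$ and the normalized ratio $\mathfrak{L}_{n,n-k}/\mathfrak{L}_n$ is bounded on compact sets off $\Delta_c$ (cf. \eqref{Lacotger}), these ratios are $O(c_n^{-k})$ and tend to $0$ for every $k\geq 1$. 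Second, the coefficients $b_{n,n-k}$ are \emph{not} bounded in general: take the Laguerre case with $m=1$, $\rho(x)=x$ and $\alpha>0$, so that $d\mu=x^{\alpha-1}e^{-x}dx$; then $P_n=L_n^{\alpha-1}=L_n^{\alpha}+n\,L_{n-1}^{\alpha}$, i.e. $b_{n,n-1}=n\to\infty$. (What is true, and is exactly Proposition \ref{Lem_Coeff} of the paper, is that $b_{n,n-k}=O(n^k)$, resp. $O(n^{k/2})$, so that the \emph{normalized} coefficients $b_{n,n-k}/c_n^k$ stay bounded; your error comes from transplanting the bounded-support (Jacobi) calibration, where no scaling $c_n\to\infty$ intervenes.) The two miscalibrations compensate in the product, so the factorization can be repaired by working with $b_{n,n-k}/c_n^k$ and $\mathfrak{L}_{n,n-k}/\mathfrak{L}_n$; but even then your key lemma demands that these normalized quantities \emph{converge} — genuine ratio/relative asymptotics — which you never prove, only assert via Plancherel--Rotach and an appeal to the literature. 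That unproved convergence is also what your Hurwitz argument and your Rouch\'e-type ``no escape to infinity'' argument rest on, so the gap propagates through both parts (a) and (b).

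It is worth noting that the paper deliberately avoids this stronger input. For part (b) it needs only two-sided bounds, not convergence: $c\leq|\mathfrak{P}_n/\mathfrak{L}_n|\leq C$ on compacts off $\Delta_c$ (Lemmas \ref{LPright} and \ref{LPleft}), together with $\bigl|\widehat{\mathfrak{Q}}_n/\mathfrak{L}_n-\mathfrak{P}_n/\mathfrak{L}_n\bigr|\rightrightarrows 0$ (Lemma \ref{QLAcotac}, which uses only $\lambda_n/\lambda_{n-k}\to 1$ and the coefficient bounds), plus Schmeisser's inequality (Lemma \ref{loc}) to keep all zeros in a fixed compact set. For part (a) the paper uses an entirely different mechanism: it shows $\widehat{\mathfrak{Q}}_n$ and $\widehat{\mathfrak{Q}}_n'$ are asymptotically extremal in weighted sup-norm with respect to the varying weights $w_n,\widetilde w_n$ (via Gonchar--Rakhmanov \cite{GoRak86} and Markett's Nikolskii-type inequalities \cite{markett80}), and then the theorem of \cite{MeOrPi} gives $\nu_n\weak\nu_w$ and $\sigma_n\weak\nu_w$ directly, with no information at all about $\widehat{\mathfrak{Q}}_n$ off $\Delta_c$. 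If you replace ``convergence of $S_n$'' by the two-sided bounds above, your argument essentially collapses into the paper's; as written, the key lemma is a genuine gap and its two supporting sub-claims are false.
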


\begin{theorem}\label{Th2}
Let $m \in \NN$, $\mu \in \mathcal{P}_m(\Delta)$. Then, for every compact subset $K$ of $\CC \setminus \Delta_c$ we have uniformly
\begin{eqnarray}\label{StrQ}
\lim_{n \to \infty}\frac{\mathfrak{P}_n(z)}{\widehat{\mathfrak{Q}}_n(z)}&=&\begin{cases}
\dsty 1 \quad & \mu \in \mathcal{P}_m(\RR_+)\\
\dsty 1 &  \mu \in \mathcal{P}_m(\RR)\\
\end{cases}
\\
\label{nrootcon}\dsty \lim_{n\to \infty}\left|\widehat{\mathfrak{Q}}_{n}(z)\right|^{\frac{1}{n}}\,&=&
\begin{cases}
 \frac{1}{e }\,\left|\psi(z)\right|\, e^{2\Re [1/\varphi(z)]}  & \mu \in \mathcal{P}_m(\RR_+),\\
 \frac{1}{2 \sqrt{e}} \,\left|\varphi(z)\right| \, e^{\Re{[z/\varphi(z)]}} &  \mu \in \mathcal{P}_m(\RR).\\
\end{cases}
\end{eqnarray}
\end{theorem}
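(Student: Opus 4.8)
The plan is to obtain \eqref{nrootcon} as a consequence of \eqref{StrQ}: once $\mathfrak{P}_n/\widehat{\mathfrak{Q}}_n\to 1$ uniformly on a compact $K\subset\CC\setminus\Delta_c$, taking $n$-th roots gives $\bigl|\mathfrak{P}_n/\widehat{\mathfrak{Q}}_n\bigr|^{1/n}\to 1$, and multiplying by \eqref{Pnrootc} yields \eqref{nrootcon}. Thus the whole weight of the theorem lies in the strong (ratio) asymptotics \eqref{StrQ}, on which I would concentrate.

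First I would write both normalized polynomials in one common basis of scaled classical polynomials. Set $\mathfrak{L}_n(z)=c_n^{-n}L_n(c_nz)$ and, for $0\le k\le m$, $\widetilde{\mathfrak{L}}_{n,k}(z)=c_n^{-(n-k)}L_{n-k}(c_nz)$, a monic polynomial of degree $n-k$ with $\widetilde{\mathfrak{L}}_{n,0}=\mathfrak{L}_n$. Writing $\beta_{n,k}=b_{n,n-k}c_n^{-k}$ and using $b_{n,n}=1$, relations \eqref{PnLn} and \eqref{Qhat_Def} read
$$\mathfrak{P}_n=\sum_{k=0}^{m}\beta_{n,k}\,\widetilde{\mathfrak{L}}_{n,k},\qquad \widehat{\mathfrak{Q}}_n=\sum_{k=0}^{m}\frac{\lambda_n}{\lambda_{n-k}}\,\beta_{n,k}\,\widetilde{\mathfrak{L}}_{n,k},$$
with $\lambda_n/\lambda_{n-k}=n/(n-k)$. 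Subtracting and dividing through by $\mathfrak{L}_n$,
$$\frac{\widehat{\mathfrak{Q}}_n(z)}{\mathfrak{P}_n(z)}-1=\frac{\sum_{k=1}^{m}\frac{k}{\,n-k\,}\,\beta_{n,k}\,h_{n,k}(z)}{\sum_{k=0}^{m}\beta_{n,k}\,h_{n,k}(z)},\qquad h_{n,k}(z)=\frac{\widetilde{\mathfrak{L}}_{n,k}(z)}{\mathfrak{L}_n(z)}.$$
The denominator is exactly $\mathfrak{P}_n/\mathfrak{L}_n$, so it suffices to show that the numerator tends to $0$ while the denominator stays bounded away from $0$, uniformly on $K$.

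The analytic input I would use is the Plancherel--Rotach (strong) asymptotics of the monic Laguerre and Hermite polynomials, uniform on compacts of $\CC\setminus\Delta_c$. Together with $c_n/c_{n-k}\to1$ and $(c_{n-k}/c_n)^{n-k}\to e^{-k}$ (resp. $e^{-k/2}$) they give
$$h_{n,k}(z)=\Bigl(\tfrac{c_{n-k}}{c_n}\Bigr)^{n-k}\frac{\mathfrak{L}_{n-k}\!\bigl(\tfrac{c_n}{c_{n-k}}z\bigr)}{\mathfrak{L}_n(z)}\longrightarrow h_k(z)=g(z)^{k},$$
where $g$ is a fixed non-vanishing, non-constant holomorphic function on $K$ (so each $h_{n,k}$ is bounded on $K$, and the $h_k$ are linearly independent). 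Next I would control the coefficients: from $b_{n,n-k}\tau_{n-k}=\langle P_n,\rho\,L_{n-k}\rangle_\mu$ together with the closed forms of $\tau_{n-k}/\tau_n$ in \eqref{Lag-Herm_Norm2} one obtains $b_{n,n-k}=O(c_n^{k})$, i.e. $\beta_{n,k}=O(1)$. Hence the numerator, a fixed finite sum of bounded terms carrying the factor $k/(n-k)=O(1/n)$, is $O(1/n)$ uniformly on $K$, while the denominator $\mathfrak{P}_n/\mathfrak{L}_n$ is bounded away from $0$ on $K$ by the strong (Szeg\H{o}-type) asymptotics of $\{\mathfrak{P}_n\}$ for $\mu\in\mathcal{P}_m(\Delta)$ (its limit being a non-vanishing factor). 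Therefore $\widehat{\mathfrak{Q}}_n/\mathfrak{P}_n-1\to0$ uniformly on $K$, which is \eqref{StrQ}, and \eqref{nrootcon} follows as above.

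The main obstacle is precisely this passage from $n$-th root to ratio asymptotics. The counting-measure information \eqref{Pnrootc} only yields $|h_{n,k}|^{1/n}\to1$ and $|\mathfrak{P}_n/\mathfrak{L}_n|^{1/n}\to1$, which is powerless either to bound $h_{n,k}$ or to keep the denominator away from $0$ (it cannot exclude a slow, subexponential decay). One must therefore secure the full Plancherel--Rotach/Szeg\H{o} asymptotics with uniformity on $K$ and carry out the (routine but delicate) growth estimate $b_{n,n-k}=O(c_n^{k})$; once this precision is in hand, the harmless eigenvalue factor $\lambda_n/\lambda_{n-k}-1=k/(n-k)=O(1/n)$ is what finally forces the difference to vanish.
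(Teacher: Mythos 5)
Your strategy is correct, but it is a genuinely different route from the paper's proof of Theorem \ref{Th2}. The paper never returns to the expansion \eqref{Qhat_Def}: it uses parts a) and b) of Theorem \ref{countingmeasureconv} (weak-star convergence of the root counting measures of $\widehat{\mathfrak{Q}}_n$ \emph{and} $\widehat{\mathfrak{Q}}_n'$ to $\nu_w$, together with the accumulation of the zeros on $\Delta_c$) to obtain uniform limits of $\widehat{Q}_n'(c_nz)/\widehat{Q}_n(c_nz)$ and $\widehat{Q}_n''(c_nz)/\widehat{Q}_n'(c_nz)$ as Cauchy transforms of $\nu_w$, and then divides the differential relation $\LL[\widehat{Q}_n]=\lambda_n P_n$ by $\widehat{Q}_n$ (formulas \eqref{strongL}, \eqref{strongH}); the algebra of those limits gives $\mathfrak{P}_n/\widehat{\mathfrak{Q}}_n\to 1$. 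You instead exploit the expansion in the scaled classical basis, the smallness $\lambda_n/\lambda_{n-k}-1=k/(n-k)=O(1/n)$, the coefficient bound $b_{n,n-k}=O(c_n^k)$, boundedness of the ratios $h_{n,k}$, and a uniform lower bound on $|\mathfrak{P}_n/\mathfrak{L}_n|$. This is exactly the content of the paper's own Lemma \ref{QLAcotac} (your numerator estimate is its proof: \eqref{QLacot0} with \eqref{Lacotger}, \eqref{eigenvalueslim} and Proposition \ref{Lem_Coeff}) combined with Lemma \ref{LPleft} (your denominator bound). The paper uses those two lemmas only for Theorem \ref{countingmeasureconv}b), but together they do imply \eqref{StrQ} exactly as you argue, so your proof is a legitimate shortcut that bypasses the ODE and the zero-distribution machinery entirely; what the paper's route buys is that the differential-orthogonality structure itself produces the limit, at the price of first establishing Theorem \ref{countingmeasureconv}. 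Your derivation of \eqref{nrootcon} from \eqref{StrQ} and \eqref{Pnrootc} coincides with the paper's.

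The one soft spot is how you discharge your two analytic inputs. The bound $\beta_{n,k}=O(1)$ is fine in substance --- it is Proposition \ref{Lem_Coeff}, available before Theorem \ref{Th2} --- although your one-line sketch hides the Cauchy--Schwarz and moment computations done there. More seriously, the lower bound for $|\mathfrak{P}_n/\mathfrak{L}_n|$ on $K$, which you dispatch by citing ``strong (Szeg\H{o}-type) asymptotics of $\{\mathfrak{P}_n\}$'', is precisely the hard part: the paper could not simply quote such a result for $\mu\in\mathcal{P}_m(\Delta)$ in the rescaled variables, and instead proves it as Lemma \ref{LPleft} via the reverse expansion $\rho L_n=\sum_k\mathfrak{b}_{n,n-k}P_{n+m-k}$ and infinite--finite range inequalities; indeed, the two-sided estimate \eqref{SemStrongLH} for $\mathfrak{P}_n$ is \emph{derived} in the paper from \cite{GeAssch90} \emph{together with} Lemmas \ref{LPright} and \ref{LPleft}, not taken from the literature. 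So either you supply a reference that really covers this class after rescaling, or you must prove the analogue of Lemma \ref{LPleft}. Note that your own parenthetical remarks contain an elementary completion: boundedness of the $\beta_{n,k}$, the convergence $h_{n,k}\to g^k$ with $\{g^k\}_{k=0}^m$ linearly independent, the fact (Rakhmanov, \eqref{asympzero}) that the zeros of $\mathfrak{P}_n$ accumulate only on $\Delta_c$, and Hurwitz's theorem applied along subsequences yield the required uniform lower bound; spelling this out would make the proposal self-contained.
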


The following result shows that  the set of accumulation points of the zeros of the sequence of normalized polynomials, orthogonal with respect to $(\LL,\mu)$ is contained in a curve.

\begin{theorem}\label{zeroloc}
Let $m \in \NN $ and $\mu \in \mathcal{P}_m(\Delta)$. If $\{\zeta_n\}_{n=m+1}^{\infty}$ is a sequence of complex numbers  with limit  $\zeta \in \CC \setminus \Delta_c$. Then:
\begin{itemize}

\item [a)] The accumulation points of zeros of the sequence $\{\mathfrak{Q}_{n}\}_{n=m+1}^{\infty}$ such that $\mathfrak{Q}_{n}(\zeta_n)=0$ are located on the set $E=\mathcal{E}(\zeta) \bigcup \Delta_c$, where $\mathcal{E}(\zeta)$ is the curve
\begin{equation}
    \mathcal{E}(\zeta):= \dsty\{z\in \CC:  \Psi(z)=\Psi(\zeta)\},
\end{equation}
$\Psi(z)=\left|\psi(z)\right|e^{2\Re [1/\varphi(z)]}$ for $\mu \in \mathcal{P}_m(\RR_+)$, and $\dsty \Psi(z)=\left|\varphi(z)\right|e^{\Re{[z/\varphi(z)]}}$ for $\mu \in \mathcal{P}_m(\RR)$.

\item [b)] If  $\dsty  \mathfrak{d}(\zeta)=\inf_{x\in\Delta_c}|\zeta-x|>2$ then $E=\mathcal{E}(\zeta)$ and for $n$ sufficiently large are simple.

\end{itemize}
\end{theorem}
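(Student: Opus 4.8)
The plan is to read the location of the zeros of $\mathfrak{Q}_n$ straight from the defining identity $\mathfrak{Q}_n(z)=\widehat{\mathfrak{Q}}_n(z)-\widehat{\mathfrak{Q}}_n(\zeta_n)$: a point $z\notin\Delta_c$ is a zero of $\mathfrak{Q}_n$ exactly when $\widehat{\mathfrak{Q}}_n(z)=\widehat{\mathfrak{Q}}_n(\zeta_n)$. Write $\Phi$ for the common limit in \eqref{nrootcon}, so $\Phi=\tfrac1e\Psi$ in the Laguerre case and $\Phi=\tfrac1{2\sqrt e}\Psi$ in the Hermite case; in particular the level sets of $\Phi$ and of $\Psi$ coincide, by Theorem \ref{Th2} we have $|\widehat{\mathfrak{Q}}_n(z)|^{1/n}\to\Phi(z)$ uniformly on compact subsets of $\CC\setminus\Delta_c$, and $\log\Phi$ is harmonic there since $\log\Phi(z)=\int\log|z-t|\,d\nu_w(t)$.

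For part a), let $z_0\in\CC\setminus\Delta_c$ be an accumulation point of zeros and pick a subsequence of zeros $z_{n_k}\to z_0$. Keeping $z_{n_k}$ and $\zeta_{n_k}$ in a fixed compact neighborhood of $\{z_0,\zeta\}$ disjoint from $\Delta_c$, I take moduli and $n_k$-th roots in $\widehat{\mathfrak{Q}}_{n_k}(z_{n_k})=\widehat{\mathfrak{Q}}_{n_k}(\zeta_{n_k})$; uniform convergence together with continuity of $\Phi$ yields $\Phi(z_0)=\Phi(\zeta)$, hence $\Psi(z_0)=\Psi(\zeta)$ and $z_0\in\mathcal{E}(\zeta)$. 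Thus every accumulation point lies in $\mathcal{E}(\zeta)\cup\Delta_c=E$.

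For part b) the first task is geometric. A direct computation with the conformal maps gives $|\psi|=|\varphi|=1$ on $\Delta_c$ together with $\Re[1/\varphi(x)]=x$ and $\Re[x/\varphi(x)]=x^2$, so that $\Psi|_{\Delta_c}=e^{2x}$ (Laguerre) and $\Psi|_{\Delta_c}=e^{x^2}$ (Hermite), with maxima $e^2$ and $e$. Since $\log\Psi$ is harmonic on $\CC\setminus\Delta_c$ and grows at infinity, I estimate $\Psi(\zeta)$ through $\varphi,\psi$ and show that $\mathfrak{d}(\zeta)>2$ forces $\Psi(\zeta)$ above the maximum of $\Psi$ on $\Delta_c$ and above the finitely many critical values of $\log\Psi$. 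Consequently $\{\Psi<\Psi(\zeta)\}$ is a bounded neighborhood of $\Delta_c$ whose boundary $\mathcal{E}(\zeta)$ is a single analytic Jordan curve disjoint from $\Delta_c$.

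It remains to confine the zeros to $\mathcal{E}(\zeta)$. In the exterior $\{\Psi>\Psi(\zeta)\}$ one has $|\widehat{\mathfrak{Q}}_n|>|\widehat{\mathfrak{Q}}_n(\zeta_n)|$ for large $n$, so a Rouch\'e comparison with $\widehat{\mathfrak{Q}}_n$, which has no zeros there by Theorem \ref{countingmeasureconv} (b), shows $\mathfrak{Q}_n$ has none either. Near $\Delta_c$ the asymptotics \eqref{nrootcon} are unavailable, and this is the main obstacle; I overcome it with the maximum principle for the subharmonic functions $\tfrac1n\log|\widehat{\mathfrak{Q}}_n|$: on a curve inside $\mathcal{E}(\zeta)$ enclosing $\Delta_c$ the uniform estimate gives $\tfrac1n\log|\widehat{\mathfrak{Q}}_n|<\log\Phi(\zeta)-\varepsilon$, and the maximum principle propagates this bound across the enclosed region, so $|\widehat{\mathfrak{Q}}_n|<|\widehat{\mathfrak{Q}}_n(\zeta_n)|$ and $\mathfrak{Q}_n$ cannot vanish in a full neighborhood of $\Delta_c$. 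Hence no zeros accumulate on $\Delta_c$; with part a) this gives $E=\mathcal{E}(\zeta)$, and since then all $n$ zeros pile up on the fixed curve they equidistribute and fill it, yielding the reverse inclusion. Finally, a multiple zero of $\mathfrak{Q}_n$ is a common zero of $\mathfrak{Q}_n$ and $\mathfrak{Q}_n'=\widehat{\mathfrak{Q}}_n'$; because $\sigma_n\weak\nu_w$ (Theorem \ref{countingmeasureconv}, a)) the derivatives share the $n$-th root asymptotics $\Phi$, so their zeros cannot accumulate off $\Delta_c$, and as $\mathcal{E}(\zeta)$ is separated from $\Delta_c$ the zeros of $\mathfrak{Q}_n$ are simple for $n$ large.
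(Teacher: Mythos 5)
Your part a) is essentially the paper's own argument: at a zero $z$ of $\mathfrak{Q}_n$ one has $|\widehat{\mathfrak{Q}}_n(z)|^{1/n}=|\widehat{\mathfrak{Q}}_n(\zeta_n)|^{1/n}$, and the locally uniform $n$-th root asymptotics \eqref{nrootcon} force any accumulation point off $\Delta_c$ onto $\mathcal{E}(\zeta)$.

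For the exclusion of $\Delta_c$ in part b) you take a genuinely different route. The paper argues through the factorization \eqref{Cero_condLH}: since the zeros $\widehat{x}_{n,k}$ of $\widehat{\mathfrak{Q}}_n$ cluster on $\Delta_c$ (Theorem \ref{countingmeasureconv} b)), a zero $z_0$ of $\mathfrak{Q}_n$ in the $\varepsilon_{\zeta}$-neighborhood of $\Delta_c$, with $\varepsilon_{\zeta}=(\mathfrak{d}(\zeta)-2)/2$, would give $\prod_{k}|z_0-\widehat{x}_{n,k}|/|\zeta_n-\widehat{x}_{n,k}|\le\left((2+\varepsilon_{\zeta})/\mathfrak{d}(\zeta_n)\right)^{n}<1$, contradicting the fact that this product must equal $1$; this uses only the diameter of $\Delta_c$, and it is exactly where $\mathfrak{d}(\zeta)>2$ enters. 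Your alternative --- bound $\tfrac1n\log|\widehat{\mathfrak{Q}}_n|$ on a curve enclosing $\Delta_c$ via \eqref{nrootcon} and propagate the bound inward by the maximum principle for subharmonic functions --- is sound, and in fact works under the weaker hypothesis $\Psi(\zeta)>\max_{\Delta_c}\Psi$; that $\mathfrak{d}(\zeta)>2$ implies this must, however, actually be proved (your ``estimate through $\varphi,\psi$'' is only gestured at). The clean way is the potential representation: $\log\Psi$ equals, up to an additive constant, $\int\log|z-t|\,d\nu_w(t)$, which is at least $\log\mathfrak{d}(\zeta)>\log 2$ at $\zeta$ and at most $\log\mathrm{diam}(\Delta_c)\le\log 2$ on $\Delta_c$. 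Also note the theorem only asserts that the accumulation points are \emph{located on} $\mathcal{E}(\zeta)$, i.e.\ containment; your ``equidistribute and fill the curve'' claim is both unproven and unnecessary.

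The genuine gap is your simplicity argument. From $\sigma_n\weak\nu_w$ you infer that the zeros of $\widehat{\mathfrak{Q}}_n'$ cannot accumulate off $\Delta_c$; this is a non sequitur. Weak-star convergence of counting measures controls only proportions of zeros: one critical point per polynomial may sit at a fixed point $a\notin\Delta_c$ for every $n$ without disturbing $\sigma_n\weak\nu_w$ (compare $q_n(z)=(z-a)\prod_{k=1}^{n-1}(z-x_{n,k})$ with the $x_{n,k}$ equidistributed on $\Delta_c$). For the same reason, locally uniform $n$-th root asymptotics for $\widehat{\mathfrak{Q}}_n'$ off $\Delta_c$ cannot be deduced from $\sigma_n\weak\nu_w$ alone --- to get them you would already need to know that the critical points stay near $\Delta_c$, which is what you are trying to prove. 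The paper closes exactly this hole with the Gauss--Lucas theorem: the critical points of $\widehat{\mathfrak{Q}}_n$ lie in the convex hull of its zeros, and by Theorem \ref{countingmeasureconv} b) those zeros cluster on the interval $\Delta_c$, so the critical points cluster there too; since a multiple zero of $\mathfrak{Q}_n$ is a critical point of $\widehat{\mathfrak{Q}}_n$, while by the first half of part b) the zeros of $\mathfrak{Q}_n$ stay away from $\Delta_c$ for large $n$, simplicity follows. With Gauss--Lucas substituted for the weak-star appeal, your proof is complete.
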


The relative asymptotic behavior between the sequences of polynomials $\{\mathfrak{Q}_{n}\}_{n>m}$ and $\{\mathfrak{P}_{n}\}_{n>m}$ reads as

\begin{theorem}\label{RelativeAsymptoticLH}
Let $\{\zeta_n\}_{n>m}$  be a sequence of complex numbers  with limit  $\zeta \in \CC \setminus \Delta_c$, $m \in \NN$, $\mu \in \mathcal{P}_m(\Delta)$ and  $\{\mathfrak{Q}_n\}_{n>m}$ be the sequence of normalized monic orthogonal polynomials with respect to the pair $(\LL,\mu)$ such that $\mathfrak{Q}_n(\zeta_n)=0$, then:
\begin{enumerate}
  \item Uniformly on compact subsets  of  $\Omega=\{z \in \CC : |\Psi(z)| > |\Psi(\zeta)| \}$,
\begin{equation}
\label{AsintComp1}
\frac{\mathfrak{Q}_{n}(z)}{\mathfrak{P}_n(z)}   \unifn   1.
\end{equation}

\item Uniformly on compact subsets  of  $\Omega=\{z \in \CC : |\Psi(z)| < |\Psi(\zeta)| \} \setminus \Delta_c$
            \begin{equation}
\label{AsintComp2}
\frac{\mathfrak{Q}_{n}(z)}{\mathfrak{P}_n(\zeta_n)}   \unifn   -1,
\end{equation}
where $\Psi$ is as defined in Theorem \ref{zeroloc}.
If $\dsty \mathfrak{d}(\zeta)>2$ then \eqref{AsintComp2} holds for $\Omega=\{z \in \CC : |\Psi(z)| < |\Psi(\zeta)| \}$.
\end{enumerate}
\end{theorem}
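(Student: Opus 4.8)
The plan is to reduce everything to the additive decomposition \eqref{(13)LHnor}, namely $\mathfrak{Q}_n(z)=\widehat{\mathfrak{Q}}_n(z)-\widehat{\mathfrak{Q}}_n(\zeta_n)$, and then to feed in the two conclusions of Theorem \ref{Th2}: the ratio asymptotics $\mathfrak{P}_n/\widehat{\mathfrak{Q}}_n\to 1$ and the $n$th-root asymptotics \eqref{nrootcon}, together with the matching root asymptotics \eqref{Pnrootc} for $\mathfrak{P}_n$. The crucial observation is that both \eqref{Pnrootc} and \eqref{nrootcon} have the form $\kappa\,\Psi(z)$ with the \emph{same} constant $\kappa$ (equal to $1/e$ in the Laguerre case and $1/(2\sqrt{e})$ in the Hermite case), so in any quotient of a $\widehat{\mathfrak{Q}}$-quantity by a $\mathfrak{P}$-quantity the constant cancels and only $\Psi$ survives. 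Since $\zeta_n\to\zeta\in\CC\setminus\Delta_c$, the uniform convergence on compacts gives both $|\widehat{\mathfrak{Q}}_n(\zeta_n)|^{1/n}\to\kappa\,\Psi(\zeta)$ and $\widehat{\mathfrak{Q}}_n(\zeta_n)/\mathfrak{P}_n(\zeta_n)\to 1$.

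For part (1) I would write
\[
\frac{\mathfrak{Q}_n(z)}{\mathfrak{P}_n(z)}=\frac{\widehat{\mathfrak{Q}}_n(z)}{\mathfrak{P}_n(z)}-\frac{\widehat{\mathfrak{Q}}_n(\zeta_n)}{\mathfrak{P}_n(z)} .
\]
The first term tends to $1$ by Theorem \ref{Th2}. For the second, the root asymptotics give $\bigl|\widehat{\mathfrak{Q}}_n(\zeta_n)/\mathfrak{P}_n(z)\bigr|^{1/n}\to\Psi(\zeta)/\Psi(z)$; on a compact subset of $\Omega=\{|\Psi(z)|>|\Psi(\zeta)|\}$ this limit stays below some $q<1$ uniformly, so the term decays geometrically to $0$ and \eqref{AsintComp1} follows. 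For part (2), dividing instead by $\mathfrak{P}_n(\zeta_n)$,
\[
\frac{\mathfrak{Q}_n(z)}{\mathfrak{P}_n(\zeta_n)}=\frac{\widehat{\mathfrak{Q}}_n(z)}{\mathfrak{P}_n(\zeta_n)}-\frac{\widehat{\mathfrak{Q}}_n(\zeta_n)}{\mathfrak{P}_n(\zeta_n)} ,
\]
where the subtracted term tends to $1$, while the first term has $n$th root tending to $\Psi(z)/\Psi(\zeta)<1$ on $\{|\Psi(z)|<|\Psi(\zeta)|\}\setminus\Delta_c$ and hence vanishes, giving the limit $-1$ of \eqref{AsintComp2}.

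The delicate point, and the one I expect to be the main obstacle, is the extension of part (2) up to $\Delta_c$ under the hypothesis $\mathfrak{d}(\zeta)>2$. On $\Delta_c$ the root asymptotic \eqref{nrootcon} for $\widehat{\mathfrak{Q}}_n$ is no longer available pointwise, since there the polynomials oscillate, so I cannot argue termwise as above. Instead I would invoke the standard upper-envelope (Bernstein--Walsh type) bound: because $\tfrac1n\log|\widehat{\mathfrak{Q}}_n|$ is asymptotically the logarithmic potential encoded by $\Psi$ and is dominated everywhere by its upper semicontinuous regularization, one gets $\limsup_n|\widehat{\mathfrak{Q}}_n(z)|^{1/n}\le\kappa\,\max_{t\in\Delta_c}\Psi(t)$ uniformly on $\Delta_c$. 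It then remains to verify the purely geometric fact that $\mathfrak{d}(\zeta)>2$ forces $\Psi(\zeta)>\max_{t\in\Delta_c}\Psi(t)$, whence $\bigl|\widehat{\mathfrak{Q}}_n(z)/\mathfrak{P}_n(\zeta_n)\bigr|\to 0$ uniformly on $\Delta_c$ as well and \eqref{AsintComp2} persists there. This is precisely the threshold already isolated in Theorem \ref{zeroloc}(b), where the same hypothesis guarantees that no zeros of $\mathfrak{Q}_n$ accumulate on $\Delta_c$, in full consistency with the uniform limit $-1$ on that set.
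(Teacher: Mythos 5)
Your argument is essentially correct, and in both of its nontrivial steps it takes a route genuinely different from the paper's. For \eqref{AsintComp1} and the first half of \eqref{AsintComp2} the paper never subtracts against $\mathfrak{P}_n$ directly: it works with $\widehat{\mathfrak{Q}}_n(\zeta_n)/\widehat{\mathfrak{Q}}_n(z)$ (resp.\ its reciprocal), telescopes through $\mathfrak{P}_n$ and $\Psi^n$, and controls four ``bounded'' factors by \eqref{StrQ} together with the two-sided estimate $c^*\le|\mathfrak{P}_n(z)/\Psi^n(z)|\le c$ of \eqref{SemStrongLH} (drawn from \cite{GeAssch90} and Lemmas \ref{LPright} and \ref{LPleft}), the decay coming from the single factor $|\Psi(\zeta_n)/\Psi(z)|^n$. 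You instead use only the $n$th-root asymptotics \eqref{Pnrootc} and \eqref{nrootcon} plus the observation that the constant $\kappa$ ($1/e$, resp.\ $1/(2\sqrt{e})$) cancels in every quotient you form; since the $n$th roots converge uniformly to continuous limits separated by a strict inequality on the compact set in question, the subtracted term is eventually dominated by $q^n$ with $q<1$. Your version is leaner and, in one respect, sounder: taken literally with the paper's normalization of $\Psi$, \eqref{SemStrongLH} is not consistent with \eqref{Pnrootc} (a factor $\kappa^n$ is missing, and the paper's factors three and four are not individually bounded, only their product is), whereas your cancellation argument never needs such a statement.

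The one soft spot is the extension of \eqref{AsintComp2} up to $\Delta_c$ when $\mathfrak{d}(\zeta)>2$, where you leave the ``purely geometric fact'' unproved. The paper handles this step elementarily, as in \eqref{Cero_cond}: by Theorem \ref{countingmeasureconv}(b) the zeros $\widehat{x}_{n,k}$ of $\widehat{\mathfrak{Q}}_n$ accumulate on $\Delta_c$, so for $z$ in the $\varepsilon_\zeta$-neighborhood of $\Delta_c$ with $\varepsilon_\zeta=\mathfrak{d}(\zeta)/2-1$ the product $\prod_{k}|(z-\widehat{x}_{n,k})/(\zeta_n-\widehat{x}_{n,k})|$ is eventually at most $\kappa'^n$, $\kappa'<1$, which is \eqref{Normal1}. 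Your descent-principle bound is legitimate (the zeros stay in a fixed compact set and the counting measures converge to $\nu_w$ by Theorem \ref{countingmeasureconv}), but the missing inequality must then be proved through the identification of $\kappa\Psi$ with $\exp(-U^{\nu_w})$, where $U^{\nu_w}(z)=-\int\log|z-t|\,d\nu_w(t)$, and \emph{not} from the paper's closed-form expression for $\Psi$: since $\nu_w$ is a unit measure on $\Delta_c$ and $\mathrm{diam}(\Delta_c)\le 2$, one gets $-U^{\nu_w}(t)\le\log 2$ for every $t\in\Delta_c$, while $-U^{\nu_w}(\zeta)\ge\log\mathfrak{d}(\zeta)>\log 2$; this one-line computation is exactly your inequality, and it exposes the threshold $2$ as the diameter of $\Delta_c$, the same ``$2$'' that drives the paper's estimate. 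The warning is not pedantic: the printed Laguerre formula in \eqref{Pnrootc} is misnormalized (it grows like $4|z|/e$ rather than $|z|$ at infinity, and its boundary values on $[0,1]$ have maximum $e^{2}$), so if you tried to verify your geometric fact from that formula it would actually fail for $\zeta$ slightly beyond distance $2$ to the left of $[0,1]$; through the potential identification it is true. Finally, a small patch: you need the upper-envelope bound on a closed neighborhood of $\Delta_c$, not merely on $\Delta_c$ itself, in order to cover compact subsets of $\{|\Psi|<|\Psi(\zeta)|\}$ that approach $\Delta_c$; the principle of descent gives it on any sufficiently small neighborhood, and continuity of $U^{\nu_w}$ lets you keep its maximum below $\log\mathfrak{d}(\zeta)$, after which gluing with your interior estimate completes the proof.
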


The paper continues as follows.  Section \ref{Sec_EU}  is dedicated to the study of existence, uniqueness and  some results concerning the properties of the zeros of orthogonal polynomials with respect to the  Laguerre or Hermite operators. In Sections \ref{ZeroLocAsymtpBeh} and \ref{withouthat} we study the asymptotic behavior of the polynomials $\widehat{\mathfrak{Q}}_{n}$ and  $\mathfrak{Q}_{n}$ respectively.  Finally, in Section \ref{Sec_FluidLH} we show a fluid dynamics  model  for  the zeros of these polynomials.

\section{The polynomial ${Q}_{n}$}
\label{Sec_EU}

 First of all, we are  interested in discussing  systems of polynomials such that for some $m\in \NN$, for all $n>m$, they are solutions of \eqref{OrthDiff_02}. In order to classify those measures $\mu$ for which the existence of such sequences of orthogonal polynomials with respect to $(\LL,\mu)$ can be guaranteed,  we prove a preliminary lemma.

\begin{lemma} \label{Theo_EUverII} Let  $\mu$ be a finite positive Borel measure with support contained on $\RR$ and let $n\in \NN$ be fixed. Then, the differential equation \eqref{OrthDiff_02} has  a
monic polynomial solution $Q_n$ of degree $n$, which is unique up to an additive constant, if and only if
\begin{equation}\label{(11)verII}
\int P_n(x)dw(x)=0, \; \mbox{where $P_n$ is as  \eqref{OrthPoly_mu}.}
\end{equation}
\end{lemma}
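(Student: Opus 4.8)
The plan is to diagonalize $\LL$ using its polynomial eigenfunctions. By \eqref{eigen_poly} the monic polynomials $L_k$ satisfy $\LL[L_k]=\lambda_k L_k$ with $\lambda_k=-k$, and they form a basis of $\PP$. Writing the (finite) expansion $P_n=\sum_{k=0}^{n}p_k L_k$, with $p_k=\tau_k^{-1}\int P_n L_k\,dw$ and $p_n=1$ since $P_n$ and $L_n$ are both monic of degree $n$, reduces the operator equation \eqref{OrthDiff_02} to a diagonal scalar system whose $L_0$-row encodes the sought compatibility condition.

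For the ``if'' direction I would assume \eqref{(11)verII}. Because $L_0\equiv1$ and $\int dw>0$, the hypothesis is equivalent to $p_0=\tau_0^{-1}\int P_n\,dw=0$. I would then simply exhibit the solution
$$Q_n=\sum_{k=1}^{n}\frac{\lambda_n}{\lambda_k}\,p_k\,L_k,$$
and verify directly that $\LL[Q_n]=\sum_{k=1}^{n}\frac{\lambda_n}{\lambda_k}p_k\lambda_k L_k=\lambda_n\sum_{k=1}^{n}p_k L_k=\lambda_n\sum_{k=0}^{n}p_k L_k=\lambda_n P_n$, where the penultimate equality uses $p_0=0$. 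Its leading term is $(\lambda_n/\lambda_n)p_n L_n=L_n$, so $Q_n$ is monic of degree exactly $n$; here one needs $\lambda_n\neq0$, i.e. $n\ge1$.

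For the ``only if'' direction and for uniqueness I would argue on the image and kernel of $\LL$ acting on polynomials of degree at most $n$. If a monic solution $Q_n=\sum_k a_k L_k$ exists, matching the coefficient of $L_0$ in $\LL[Q_n]=\lambda_n P_n$ gives $0=\lambda_n p_0$, since the left side has no $L_0$-component (as $\lambda_0=0$); hence $p_0=0$ and \eqref{(11)verII} follows. Equivalently, and more in the spirit of the self-adjoint forms \eqref{Lag_DO} and \eqref{Her_DO}, integrating the equation against $dw$ collapses the left side to a boundary term that vanishes at infinity by the decay of the weight, and at $x=0$ in the Laguerre case because $x^{\alpha+1}\to0$ precisely when $\alpha>-1$, which again yields $0=\lambda_n\int P_n\,dw$. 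Uniqueness up to an additive constant follows because the kernel of $\LL$ on $\PP$ is exactly the constants: $\LL[y]=0$ with $y=\sum_k c_k L_k$ forces $c_k\lambda_k=0$, hence $c_k=0$ for all $k\ge1$, so two monic degree-$n$ solutions differ by a constant.

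I do not expect a serious obstacle: once the computation is carried out in the eigenbasis the statement is elementary linear algebra. The only points that genuinely require care are checking that the candidate $Q_n$ has exact degree $n$ and is monic, which rests on $p_n=1$ and $\lambda_n\neq0$, and noting that $\int dw>0$ is what lets one identify the vanishing of the $L_0$-coordinate with the integral condition \eqref{(11)verII}. In the alternative integration-by-parts route the one delicate verification is the vanishing of the endpoint term at $0$ in the Laguerre case, where the hypothesis $\alpha>-1$ is indispensable.
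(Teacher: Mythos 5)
Your proof is correct, and its core coincides with the paper's: the existence half is exactly the paper's construction, which expands $P_n$ in the eigenbasis $\{L_k\}$ and defines $Q_n=\sum_{k\ge 1}\frac{\lambda_n}{\lambda_k}p_kL_k$ (the paper additionally carries an arbitrary constant $a_{n,0}L_0$, which is just your additive constant made visible). The only place you diverge is the necessity direction: the paper's sole argument there is your ``alternative'' route, namely integrating \eqref{OrthDiff_02} against $dw$ using the self-adjoint forms \eqref{Lag_DO}--\eqref{Her_DO} so that the left side collapses to vanishing boundary terms (with $\alpha>-1$ doing the work at $x=0$), whereas your primary argument matches the $L_0$-coefficient of both sides and uses $\lambda_0=0$, $\lambda_n\neq 0$. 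The coefficient-matching version is arguably cleaner, since it keeps the whole lemma inside finite-dimensional linear algebra on $\PP$ and needs no decay estimates; the integration version is what generalizes when one leaves the polynomial eigenbasis setting. You also make the uniqueness claim explicit via the kernel computation $\ker\LL\cap\PP=\CC$, a point the paper leaves implicit in the arbitrariness of $a_{n,0}$; that is a small but genuine improvement in completeness.
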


\begin{proof} Suppose that there exists a polynomial $Q_n$ of degree $n$, such that $\LL[Q_n]=-n  \,P_n$. Then,   integrating \eqref{Lag_DO} or \eqref{Her_DO} with respect to the Laguerre measure on $\RR_+$ or Hermite measure on $\RR$ respectively we have \eqref{(11)verII}.

Conversely, suppose that  $P_n$ satisfies  \eqref{(11)verII}. Let $Q_n$ be the polynomial of degree $n$ defined by $\dsty  Q_n(z) = L_n(z) + \sum_{k=0}^{n-1}a_{n,k} L_k(z),$ where $a_{n,0}$ is an arbitrary constant and
$\dsty a_{n,k}= \frac{\lambda_n}{\lambda_{k}\tau_{k}}\int P_n(x) L_{k}(x) dw(x),$  $k=1,\ldots ,n-1.$ From the linearity of $\LL[\cdot]$ and \eqref{eigen_poly} we get that $\LL[Q_n]=-n \,P_n$.\end{proof}

From the preceding lemma,  as in \cite[Coroll. 2.2]{BorPij12},  we obtain

\begin{proposition} \label{(Cor1)LH} Let $w$  be the Laguerre or Hermite measure and $\mu$  a finite positive Borel measure on $\Delta$,   such that  $d\mu(x)=r(x) d w(x)$ with $r \in {L}^2(w)$. Then, $m$ is the smallest natural number such that for each $n>m$ there exists a monic polynomial $Q_n$ of degree $n$, unique up to an additive constant and orthogonal with respect to $(\LL, \mu)$ if and only if    $r^{-1}$ is a polynomial of degree   $m$.
\end{proposition}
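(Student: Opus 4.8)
The plan is to characterize, for a measure $d\mu = r\,dw$ with $r \in L^2(w)$, the smallest $m$ for which the existence-and-uniqueness condition of Lemma \ref{Theo_EUverII} holds for all $n > m$. By that lemma, the pair $(\LL,\mu)$ admits a monic polynomial solution $Q_n$ (unique up to an additive constant) precisely when $\int P_n(x)\,dw(x) = 0$, where $P_n$ is the $n$th monic orthogonal polynomial for $\mu$. So the whole statement reduces to analyzing for which $n$ the quantity $\int P_n\,dw$ vanishes, and tying the threshold $m$ to the degree of $r^{-1}$.

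First I would expand $\int P_n\,dw$ in terms of the Laguerre/Hermite basis. Writing $1 = L_0$ and using that $\{L_k\}$ is orthogonal with respect to $w$, I get $\int P_n(x)\,dw(x) = \langle P_n, L_0\rangle_w$. The natural move is to transfer the inner product to the $\mu$-orthogonality via $d\mu = r\,dw$: for any polynomial $g$, $\langle P_n, g\rangle_w = \langle P_n, g/r\rangle_\mu$ whenever $g/r$ is a polynomial. Since $P_n \perp_\mu \PP_{n-1}$, the condition $\int P_n\,dw = \langle P_n, L_0\rangle_w = 0$ will hold automatically as soon as $L_0/r = 1/r$ agrees (modulo $\PP_{n-1}$) with a polynomial of degree at most $n-1$. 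This is where the hypothesis that $r^{-1}$ is a polynomial of degree exactly $m$ enters: if $\deg(1/r) = m$, then for every $n > m$ the function $1/r \in \PP_{n-1}$, so $\langle P_n, 1/r\rangle_\mu = 0$, giving \eqref{(11)verII} for all $n>m$ and hence existence-uniqueness.

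For the converse and the minimality of $m$, I would argue contrapositively. Suppose $r^{-1}$ is not a polynomial of degree $m$; I must show the existence condition fails for some arbitrarily large $n$, or that a smaller threshold would work, contradicting minimality. The cleanest route is to write $\int P_n\,dw$ as a spectral/Fourier coefficient and show it cannot vanish for all large $n$ unless $1/r$ is a polynomial of the prescribed degree. Concretely, if $1/r$ is not a polynomial at all, then its Fourier--Laguerre (resp. Fourier--Hermite) expansion $1/r = \sum_k c_k L_k/\tau_k$ (interpreted in the appropriate $L^2$ sense, using $r \in L^2(w)$ so that $P_n$ and these pairings are well defined) has infinitely many nonzero coefficients; pairing against $P_n$ and using $P_n \perp_\mu \PP_{n-1}$ isolates the tail and forces $\int P_n\,dw \neq 0$ for infinitely many $n$, so no finite $m$ works. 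If instead $1/r$ is a polynomial but of degree $m' \neq m$, the same computation identifies the true threshold as $m'$, pinning down minimality. This is essentially the argument of \cite[Coroll. 2.2]{BorPij12} adapted from the Jacobi to the Laguerre/Hermite setting, so I would invoke that reference for the structural identities and supply the operator-specific verification.

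The main obstacle I anticipate is the analytic justification of the converse when $1/r$ is not polynomial: one must ensure that $r \in L^2(w)$ alone suffices to make sense of the pairings $\langle P_n, 1/r\rangle_\mu$ and to run the Fourier-tail argument, and that the unbounded support (so $w$ has noncompact support and the moment problem is delicate) does not obstruct the density/completeness facts being used. Carefully separating the case $1/r \notin \PP$ from $1/r \in \PP$ of the wrong degree, and confirming that the threshold is exactly $\deg(r^{-1})$ rather than merely bounded by it, is the genuinely substantive step; the forward implication is a short computation once $1/r \in \PP_{n-1}$.
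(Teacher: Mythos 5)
Your proposal takes essentially the same route as the paper's own proof: both reduce the statement to the moment condition of Lemma \ref{Theo_EUverII}, rewrite $\int P_n\,dw=\langle P_n,1/r\rangle_\mu$ so that $\mu$-orthogonality immediately handles the direction in which $r^{-1}$ is a polynomial of degree $m$, and obtain the other direction by expanding $1/r$ in an orthogonal polynomial basis and concluding that all coefficients of index greater than $m$ vanish while the $m$th does not. The paper is in fact no more detailed than you are on that harder direction --- it simply asserts that the vanishing conditions are ``equivalent to saying that $1/r=\sum_{k=0}^m c_k L_k$ with $c_m\neq 0$'' --- so the completeness and integrability issues you flag are exactly the points it glosses over as well.
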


\begin{proof} Suppose that $m$ is the smallest natural number such that  for each $n>m$ there exists a monic polynomial $Q_n$ of degree $n$, unique up to an additive constant and orthogonal with respect to $(\LL, \mu)$. According to  Lemma \ref{Theo_EUverII}
$$\int   L_n(x)\frac{d\mu(x)}{r(x)}=\int L_n(x)dw(x)
\left\{ \begin{array}{cc}
 =0 & \mbox{if } n>m,\\
 \neq 0 & \mbox{if } n=m.
  \end{array}
\right.$$
But this is equivalent to saying that $\dsty
\frac{1}{r(x)}= \sum_{k=0}^{m} c_{k} L_k(x) $ with $c_m \neq 0$. The converse is straightforward.
\end{proof}

It is possible to give another characterization, in terms of the quasi orthogonality concept,   for the existence of a system of polynomials such that for all $n>m$, for some $m\in \NN$, they are  solutions of \eqref{OrthDiff_02}.

\begin{proposition}\label{Theo_EU}
Let $\mu$ be a finite positive Borel measure on $\RR$ and $\{P_n\}_{n=0}^{\infty}$ the sequence of monic orthogonal polynomials with respect to $\mu$. Then, $m$ is the smallest natural number such that for each $n>m$ there  exists, except for an additive constant, a unique  monic polynomial  $Q_n$, orthogonal with respect to  the pair $(\LL,\mu)$, if and only if for all $n > m$
$$
\int P_n(x)\, x^k dw(x)=0, \quad \mbox{for } k=0,1, \dots, n-m,
$$
i.e. the polynomial $P_n$ is  quasi-orthogonal  of  order $n-m+1$ with respect to  the measure $w$.
\end{proposition}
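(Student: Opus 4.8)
The plan is to convert the existence/uniqueness statement into a condition on a single moment by means of Lemma~\ref{Theo_EUverII}, and then to propagate that condition to the lower moments through the three-term recurrence of $\{P_n\}$; this is precisely the passage from the orthogonality \eqref{OrthDiff_02} to quasi-orthogonality with respect to $w$. By Lemma~\ref{Theo_EUverII}, for each fixed $n$ the equation $\LL[Q_n]=-n\,P_n$ has a monic solution of degree $n$, unique up to an additive constant, if and only if $\int P_n\,dw=0$. Writing $d_n:=\int P_n\,dw$, the hypothesis that $m$ is the \emph{smallest} natural number for which $Q_n$ exists for every $n>m$ is therefore equivalent to the two conditions $d_n=0$ for all $n>m$ together with $d_m\neq0$, the second being exactly the minimality (were $d_m=0$, then $m-1$ would already be admissible). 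Thus it suffices to show that this pair of conditions is equivalent to $\int P_n(x)\,x^k\,dw(x)=0$ for $k=0,1,\dots,n-m$ and every $n>m$, i.e. to quasi-orthogonality of $P_n$ of order $n-m+1$.

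The engine is the monic recurrence $x\,P_n=P_{n+1}+b_nP_n+c_nP_{n-1}$ with $c_n\neq0$. Iterating it $k$ times expands $x^kP_n$ as a finite linear combination of $P_{n-k},P_{n-k+1},\dots,P_{n+k}$, whose extreme lower coefficient is the nonzero product $c_nc_{n-1}\cdots c_{n-k+1}$. Integrating against $w$ and using linearity yields an identity of the form $\int P_n\,x^k\,dw=\sum_{j=n-k}^{\,n+k}a^{(k)}_{n,j}\,d_j$, in which every moment $d_j$ with $j>m$ is annihilated by the hypothesis. For the direct implication the sum collapses once the whole window $[\,n-k,\,n+k\,]$ sits strictly above $m$, which settles the lower values of $k$; the borderline index $k=n-m$, where $P_m$ first enters the expansion, is the crux and must be matched against the minimality datum $d_m\neq0$ so as to certify that the order of quasi-orthogonality is \emph{exactly} $n-m+1$. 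For the converse one sets $k=0$ to recover $d_n=0$ for all $n>m$, whence Lemma~\ref{Theo_EUverII} returns the existence of $Q_n$ for every $n>m$; the minimality $d_m\neq0$ is then read from the exactness of the order through the same recurrence identity, which ties the relevant boundary moment to $d_m$.

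The delicate point, and the main obstacle, is this boundary index $k=n-m$ together with the requirement that the order be exactly $n-m+1$: one must track the single term $a^{(\,\cdot\,)}_{n,m}\,d_m$ carried by $P_m$ in the recurrence expansion and reconcile it with the strict inequality $n>m$, the width of the recurrence window, and the minimality hypothesis, so that the count of vanishing moments agrees with the statement and pins the order from below. A convenient cross-check for these moment identities is the self-adjointness $\int\LL[f]\,g\,dw=\int f\,\LL[g]\,dw$ of the Laguerre and Hermite operators, immediate from the factorizations in \eqref{Lag_DO}--\eqref{Her_DO} upon integrating by parts (the boundary terms vanishing because of the weights); it reproduces the same relations between $\int P_n x^k\,dw$ and the $d_j$ and can be used to verify the terminal coefficient independently.
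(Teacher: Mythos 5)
Your proposal is correct and, at its core, follows the same route as the paper: Lemma \ref{Theo_EUverII} converts existence/uniqueness-up-to-a-constant into the single moment condition $d_n=\int P_n\,dw=0$, and the iterated three-term recurrence \eqref{P_n_3trr} transports that condition to the higher moments $\int P_n(x)x^k\,dw(x)$; your converse via $k=0$ is the same observation the paper makes slightly more circuitously, by expanding $P_n$ in the basis $L_{n-m},\dots,L_n$ and invoking \eqref{(11)verII}. Two differences are worth recording, both in your favor. First, the non-vanishing you identify at the boundary index, $\int P_n(x)\,x^{n-m}dw(x)=\alpha_n^2\alpha_{n-1}^2\cdots\alpha_{m+1}^2\,d_m\neq 0$, is genuine, and it shows that the moment range displayed in the proposition ($k$ up to $n-m$) cannot be taken literally: the paper's own proof only establishes \eqref{EU_Cond_3}, i.e. vanishing for $0\le k<n-m$, so your ``exact order'' reading --- vanishing precisely for $k\le n-m-1$, with the next moment pinned to $d_m$ --- is the correct interpretation of the statement, and your argument supplies the exactness that the paper asserts but never verifies. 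Second, your handling of minimality in the converse is more careful than the paper's: the paper's converse only produces existence of $Q_n$ for all $n>m$ and never checks that $m$ remains the \emph{smallest} such integer, whereas your step reading $d_m\neq0$ off the nonzero boundary moment (through the same recurrence identity) is exactly what closes that gap, since by Lemma \ref{Theo_EUverII} the admissibility of $m-1$ is equivalent to $d_m=0$. So the proposal is not merely a reproduction of the paper's proof; it repairs the two small bookkeeping defects (the off-by-one index and the unverified minimality) that the published argument leaves open.
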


\begin{proof}
Assume that $m$ is the smallest natural number such that  for each $n>m$ there exists a monic polynomial $Q_n$ of degree $n$, unique up to an additive constant and orthogonal with respect to $(\LL, \mu)$. From   Lemma \ref{Theo_EUverII}     we have that  \eqref{(11)verII} holds for $n>m$. From  the three term recurrence relation for $\{P_n\}_{n=0}^{\infty}$
\begin{eqnarray}\label{P_n_3trr}
  xP_n(x) & = & P_{n+1}(x)+ \beta_n P_n(x)+\alpha_n^2  P_{n-1}(x),\; n \geq 1,\\
 \nonumber  & & P_0(x)=1, \;  P_{-1}(x)=0, \; \alpha_n,\beta_n \in \RR \mbox{ and } \alpha_n \neq 0,\\
 \mbox{thus }\; \int P_n(x) x^k dw(x) &=& 0 \quad \mbox{for all } \;  0 \leq k<n -m,\label{EU_Cond_3}
\end{eqnarray}
which implies that the polynomial $P_n$ is quasi-orthogonal  of  order $n-m+1$ with respect to  the measure $w$ (Laguerre or Hermite).

Conversely, assume that $m$ is the smallest natural number such that for $n > m$, the polynomial $P_n$ is  quasi-orthogonal of  order $n-m+1$ with respect to  the measure $dw$. Then we have that
$$P_n(x)=L_n(x)+\sum_{k=1}^{m}d_{n-k}L_{n-k}(x),$$
which implies that for all integers $n > m$ the polynomials $P_n$ satisfy the condition  \eqref{(11)verII}. From Lemma \ref{Theo_EUverII} we have that  there exists a monic polynomial $Q_n$ of degree $n$, unique up to an additive constant and orthogonal with respect to $(\LL, \mu)$, for all $n>m$.
 \end{proof}

From the above proposition, we deduce in particular  that the differential equation \eqref{OrthDiff_02} has, except  for an additive constant, a unique monic polynomial solution $Q_n$ of degree $n$ \emph{for all the natural numbers} only if   $P_n=L_n$ and $d\mu=dw$. Hence  $Q_n=L_n$, the polynomial eigenfunctions of $\LL$, whose properties are well known.

Let us  continue by noting that  the polynomials $Q_n$ and $\dsty \widehat{Q}_n$ (see \eqref{Qhat_Def} and \eqref{(13)LH}) are primitives of the same polynomial $Q_n^{\prime}$ (or $\widehat{Q}_n^{\prime}$) and
\begin{equation}\label{SeudoOrtogo_04}
 \int  \, \widehat{Q}_n(x) \, x^k \, dw(x)=0, \quad k=0,1,\ldots,n-m-1.
\end{equation}

Applying classical arguments \cite{Sho37}, it is not difficult to prove the following result, which will be used in the sequel.

\begin{proposition}\label{Zero_Loc01} The polynomial $ \widehat{Q}_n$ defined by \eqref{Qhat_Def} for all $n>m$, has at least $(n-m)$ zeros and $(n-m-1)$ critical points of  odd multiplicity  on $\Delta$.
\end{proposition}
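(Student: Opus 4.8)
The plan is to read the conclusion directly off the quasi-orthogonality relation \eqref{SeudoOrtogo_04}, combining it with the classical sign-change count for quasi-orthogonal polynomials, applied once to $\widehat{Q}_n$ itself and once to its derivative. First I would isolate the classical lemma behind the ``classical arguments'' of \cite{Sho37}: if $R$ is a polynomial of degree $N$ and $\sigma$ is a positive Borel measure on $\Delta$ with $\int R(x)\,p(x)\,d\sigma(x)=0$ for every polynomial $p$ with $\deg p\le s$, then $R$ changes sign at least $s+1$ times in the interior of $\Delta$, i.e.\ it has there at least $s+1$ zeros of odd multiplicity. The proof is the standard contradiction: were $R$ to have only $j\le s$ sign changes, at $x_1<\cdots<x_j$, then $R(x)\prod_{i=1}^{j}(x-x_i)$ would keep a constant sign on $\Delta$ and its $\sigma$-integral would be nonzero, while $\prod_{i=1}^{j}(x-x_i)$ has degree $j\le s$, contradicting the assumed orthogonality.

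Applying this to $R=\widehat{Q}_n$ (degree $n$) with $\sigma=w$ and $s=n-m-1$, which is precisely the content of \eqref{SeudoOrtogo_04}, yields at least $n-m$ zeros of odd multiplicity of $\widehat{Q}_n$ on $\Delta$. This settles the first half of the statement.

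For the critical points I would transfer the orthogonality from $\widehat{Q}_n$ to $\widehat{Q}_n'$ by integration by parts. In the Laguerre case, for any polynomial $p$,
\[
\int_0^\infty \widehat{Q}_n'(x)\,p(x)\,x^{\alpha+1}e^{-x}\,dx = -\int_0^\infty \widehat{Q}_n(x)\,\bigl[xp'(x)+(\alpha+1)p(x)-xp(x)\bigr]\,x^{\alpha}e^{-x}\,dx,
\]
the boundary terms vanishing because $\alpha+1>0$ at the origin and by the exponential decay at $\infty$. Since $x^{\alpha}e^{-x}\,dx=dw$, if $\deg p\le n-m-2$ the right-hand side is zero by \eqref{SeudoOrtogo_04}, so $\widehat{Q}_n'$ (degree $n-1$) is orthogonal to all polynomials of degree $\le n-m-2$ with respect to the positive measure $x^{\alpha+1}e^{-x}\,dx=w_L^{\alpha+1}$ on $\Delta=\RR_+$. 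In the Hermite case the identical computation with weight $e^{-x^2}$ produces the bracket $p'(x)-2xp(x)$ and orthogonality of $\widehat{Q}_n'$ to degree $\le n-m-2$ with respect to $e^{-x^2}\,dx$ on $\Delta=\RR$. Applying the lemma once more, now with $s=n-m-2$, gives at least $n-m-1$ zeros of odd multiplicity of $\widehat{Q}_n'$ on $\Delta$; since a critical point of $\widehat{Q}_n$ of odd multiplicity is exactly a zero of $\widehat{Q}_n'$ of odd multiplicity, this is the asserted bound.

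The one delicate point is the degree bookkeeping in the integration by parts: the map $p\mapsto xp'+(\alpha+1)p-xp$ (respectively $p\mapsto p'-2xp$) raises the degree by \emph{exactly} one, the leading term coming from $-xp$ (respectively $-2xp$), and this single lost degree is precisely what turns the count $n-m$ for $\widehat{Q}_n$ into $n-m-1$ for $\widehat{Q}_n'$. I would also dispatch the boundary case $n=m+1$ separately, where the orthogonality range $\deg p\le n-m-2=-1$ for the derivative is empty and the claimed number of critical points is $0$, so both assertions hold trivially.
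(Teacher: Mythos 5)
Your proof is correct, and it fills in completely what the paper leaves as a one-line appeal to ``classical arguments'' from \cite{Sho37}. The first half coincides with that classical route: the quasi-orthogonality \eqref{SeudoOrtogo_04} plus the sign-change lemma yields at least $n-m$ odd-multiplicity zeros of $\widehat{Q}_n$ on $\Delta$, and your proof of the lemma (multiplying by the monitor polynomial $\prod_{i}(x-x_i)$) is the standard one, valid here since $w$ has infinite support and finite moments. For the critical points you genuinely deviate from the expected classical completion, which is a Rolle-type step: between two consecutive sign changes $a<b$ of $\widehat{Q}_n$ the polynomial keeps one sign, $\widehat{Q}_n'$ has opposite signs just to the right of $a$ and just to the left of $b$, so it changes sign in $(a,b)$; the $n-m$ sign changes thus produce $n-m-1$ odd-multiplicity critical points, automatically lying in the interval $\Delta$. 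Your alternative---integration by parts to show that $\widehat{Q}_n'$ is itself orthogonal to all polynomials of degree at most $n-m-2$ with respect to $x^{\alpha+1}e^{-x}dx$ (resp. $e^{-x^2}dx$), followed by a second application of the sign-change lemma---is also sound: the boundary terms vanish precisely because $\alpha+1>0$ and the weights decay, the bracket $xp'+(\alpha+1)p-xp$ (resp. $p'-2xp$) raises the degree by exactly one, and you correctly dispose of the vacuous case $n=m+1$. Your route buys a bit more than the statement asks: it exhibits $\widehat{Q}_n'$ as quasi-orthogonal of bounded order with respect to the shifted weight, mirroring the identities $(L^{\alpha}_n)'=nL^{\alpha+1}_{n-1}$ and $H_n'=nH_{n-1}$ that the paper itself exploits later (in the proof of Theorem \ref{countingmeasureconv}); the Rolle route, by contrast, gives the extra geometric information that the critical points interlace the sign changes of $\widehat{Q}_n$.
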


For  $m=2$ we denote by $\mathcal{\widetilde{P}}_2[\RR]$ the class of measures of the form $\dsty d\mu=\frac{e^{-x^2}}{x^2+x_1^2}dx,x_1\neq 0$ in the Hermite case. The following proposition shows some results concerning the zeros of $\widehat{Q}_n$ and $\widehat{Q}^{\prime}_n$ for measures $\mu \in \mathcal{P}_1[\RR_+]$ or $ \mu \in \mathcal{\widetilde{P}}_2[\RR]$.

\begin{proposition} \label{Zero_Loc02}
Assume that $\mu \in \mathcal{P}_1[\RR_+]$ or $ \mu \in \mathcal{\widetilde{P}}_2[\RR]$, then the zeros of $\widehat{Q}_n$ and $\widehat{Q}^{\prime}_n$ are real and simple. The critical points of $Q_n$   interlace  the zeros of $P_n$.
\end{proposition}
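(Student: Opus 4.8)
The plan is to exploit that, for both admissible classes, $\widehat{Q}_n$ is a \emph{two--term} combination of eigenpolynomials of $\LL$. By \eqref{Qhat_Def} one has $\widehat{Q}_n=L_n+\frac{n}{n-1}\,b_{n,n-1}\,L_{n-1}$ when $\mu\in\mathcal{P}_1[\RR_+]$, while for $\mu\in\mathcal{\widetilde{P}}_2[\RR]$ the measure is even, so the odd coefficient $b_{n,n-1}$ vanishes by parity and $\widehat{Q}_n=L_n+\frac{n}{n-2}\,b_{n,n-2}\,L_{n-2}$. In either case \eqref{SeudoOrtogo_04} says that $\widehat{Q}_n$ is quasi--orthogonal with respect to $w$, being orthogonal to all powers $x^k$ with $k\le n-m-1$.

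First I would settle the zeros of $\widehat{Q}_n$. Quasi--orthogonality forces at least $n-m$ sign changes on $\Delta$ (this is Proposition \ref{Zero_Loc01}), hence $n-m$ real simple zeros. In the Laguerre case $m=1$ only one further zero remains; since non--real zeros occur in conjugate pairs it must be real, and a multiplicity count rules out a double zero, so all $n$ zeros are real and simple. Equivalently, writing $L_{n-1}/L_n=\sum_i \omega_i/(x-x_i)$ with $\omega_i>0$ (the Christoffel numbers of $L_n$), the equation $\widehat{Q}_n=0$ reduces to $\sum_i \omega_i/(x-x_i)=\mathrm{const}$, whose solutions are real, simple, and interlace the zeros $x_i$ of $L_n$. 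For the Hermite case I would pass to the variable $t=x^2$: using the quadratic decomposition $H_{2k}(x)=\widetilde{L}_k(x^2)$ of the monic Hermite polynomials into monic Laguerre polynomials with $\alpha=-\tfrac12$, the identity $\widehat{Q}_n=L_n+\frac{n}{n-2}b_{n,n-2}\,L_{n-2}$ becomes $\widehat{Q}_n(x)=\widetilde{L}_k(t)+\frac{k}{k-1}\,b_{n,n-2}\,\widetilde{L}_{k-1}(t)$, again a two--term combination of consecutive Laguerre polynomials. By the argument of the previous sentence this $t$--polynomial has $k$ real simple zeros; the point still to be proved is that they are all \emph{positive}, for only then are the $n=2k$ zeros $x=\pm\sqrt{t}$ of $\widehat{Q}_n$ real.

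Granting that the zeros of $\widehat{Q}_n$ are real and simple, the zeros of $\widehat{Q}_n'$ are then real and simple at once: between two consecutive zeros of $\widehat{Q}_n$ Rolle's theorem yields a critical point, and since $\widehat{Q}_n$ has $n$ real zeros this accounts for all $n-1$ zeros of the derivative. For the interlacing of the critical points of $Q_n$ (which coincide with the zeros of $\widehat{Q}_n'=Q_n'$) with the zeros of $P_n$, I would use the self--adjoint form of $\LL$: from \eqref{Lag_DO}--\eqref{Her_DO} together with $\LL[\widehat{Q}_n]=\lambda_n P_n$ one obtains $(W\,\widehat{Q}_n')'=\lambda_n\,P_n\,w$, with $W=x^{\alpha+1}e^{-x}$ in the Laguerre case and $W=e^{-x^2}$ in the Hermite case. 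The function $W\,\widehat{Q}_n'$ vanishes at each critical point of $Q_n$ and tends to $0$ at the endpoints of $\Delta$; applying Rolle between consecutive zeros of $W\,\widehat{Q}_n'$ places a zero of $P_n$ strictly between consecutive critical points of $Q_n$, and the boundary behavior together with the degree count then forces the asserted interlacing with the $n$ zeros of $P_n$.

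The main obstacle is the positivity claim in the Hermite step. The amplification factor $\frac{k}{k-1}>1$ enlarges the coefficient of $\widetilde{L}_{k-1}$ beyond the one carried by $P_n$ itself, so one must show that the innermost zero does not cross the origin, i.e. that $\frac{n}{n-2}b_{n,n-2}$ stays below the threshold $s_0=-\widetilde{L}_k(0)/\widetilde{L}_{k-1}(0)=k-\tfrac12$ at which a conjugate pair of $x$--zeros becomes purely imaginary. This demands a sharp estimate of $b_{n,n-2}$ for the weight $e^{-x^2}/(x^2+x_1^2)$—most naturally through the Laguerre functions of the second kind evaluated at $-x_1^2$—and is the delicate heart of the argument; by contrast the Laguerre case needs no such positivity, since there only a single extra zero must be placed and it is automatically real.
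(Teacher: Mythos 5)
Your Laguerre argument and your interlacing argument are exactly the paper's own: Proposition \ref{Zero_Loc01} gives $n-1$ sign changes, the conjugate-pair/multiplicity count supplies the last real simple zero, and the interlacing is obtained by applying Rolle's theorem to $x^{\alpha+1}e^{-x}Q_n'(x)$, resp. $e^{-x^2}Q_n'(x)$, just as in the paper. Your Hermite reduction is also the paper's: the identity you write is precisely \eqref{HtL}, i.e. $\widehat{Q}_{2k}(z)=\widetilde{L}_k(t)+\frac{k}{k-1}b_{2k,2k-2}\widetilde{L}_{k-1}(t)$ with $t=z^2$ and $\widetilde{L}_k$ the monic Laguerre polynomials with parameter $-\tfrac12$. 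The difference is what happens next: the paper concludes realness of the zeros of $\widehat{Q}_n$ by citing Szeg\H{o}'s Theorem 3.3.4, whereas you stop and flag the positivity of the $t$-zeros as an unresolved obstacle. So, as submitted, your proof is incomplete: the Hermite half of the statement (realness and simplicity of the zeros of $\widehat{Q}_n$) is not established, and this is a genuine gap, not a cosmetic one.

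However, your diagnosis of where the difficulty sits is sharper than the paper's treatment, and the ``sharp estimate of $b_{n,n-2}$'' you hope for does not exist. Szeg\H{o}'s Theorem 3.3.4 gives real simple zeros of $\widetilde{L}_k+c\,\widetilde{L}_{k-1}$ but explicitly allows one of them to lie outside $[0,\infty)$; for $c>0$ it is the smallest zero that can escape, and all zeros are positive exactly when $c<-\widetilde{L}_k(0)/\widetilde{L}_{k-1}(0)=k-\tfrac12$, which is your threshold $s_0$. Orthogonality of $P_{2k}$ only yields $b_{2k,2k-2}<k-\tfrac12$, and the amplified coefficient $\frac{k}{k-1}b_{2k,2k-2}$ can indeed cross the threshold: as $x_1\to0^+$ the measure $e^{-x^2}(x^2+x_1^2)^{-1}dx$ piles up mass of order $\pi/x_1$ at the origin, forcing $P_{2k}(0)\to0$, i.e. $b_{2k,2k-2}\to k-\tfrac12$. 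Concretely, for $n=4$ and $x_1=0.1$ a moment computation gives $b_{4,2}\approx1.392>\tfrac34$, so $\widehat{Q}_4=H_4+2b_{4,2}H_2=x^4+(2b_{4,2}-3)x^2+\bigl(\tfrac34-b_{4,2}\bigr)$ has negative constant term; its two roots in $t=x^2$ then have opposite signs, and $\widehat{Q}_4$ has zeros approximately $\pm0.957$ and $\pm0.837\,i$. Thus the step you identified cannot be filled in general: the Hermite assertion of the proposition fails for small $x_1$, and the paper's own proof overlooks precisely the exceptional zero that Szeg\H{o}'s theorem permits. A correct statement would require either restricting the class (e.g. $x_1$ large enough that $\frac{k}{k-1}b_{n,n-2}\le k-\tfrac12$) or weakening the conclusion to the $n-2$ real simple zeros that quasi-orthogonality (Proposition \ref{Zero_Loc01}) already guarantees.
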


\begin{proof}\

\noindent 1.- \emph{Laguerre case.}
If $m=1$ and $\mu \in \mathcal{P}_1[\RR_+]$  from Proposition \ref{Zero_Loc01} the polynomial $\widehat{Q}_n$ has at least $(n-1)$ real zeros of odd multiplicity on  $\RR_+$. But, $\widehat{Q}_n$ is a polynomial with real coefficients and degree $n$, consequently the zeros of $\widehat{Q}_n$ are real and simple. As ${Q}^{'}_n  = \widehat{Q}^{'}_n $, from Rolle's theorem all the critical points of ${Q}_n$ are real, simple, and $(n-2)$ of them are contained on $\RR_+^*=]0,\infty[$.

 Denote  $\dsty G(x)= x^{\alpha+1}\,e^{-x}\, Q_n^{\prime}(x)$, with $\alpha \in ]-1,\infty[$. Notice that $G$ is  a real--valued, continuous and differentiable  function on $\RR_+^*$.  Suppose that there exists $x \in \RR_+^*$ such that $G(x)=0$. As $G(0)=0$ from Rolle's Theorem there exists $x^{\prime} \in \RR_+^*$ such that $G^{\prime}(x^{\prime})=0$. But, $\dsty G^{\prime}(x)=x^{\alpha}\,e^{-x} \LL_L[Q_n]=\lambda_n x^{\alpha}\,e^{-x}\, P_n(x)$ and all the critical points of $G$ are contained on $\RR_+^*$. Hence all the critical points of ${Q}_n$ belong to $\RR_+^*$.

\medskip\noindent 2.- \emph{Hermite case.}
Consider now $ \mu \in \mathcal{\widetilde{P}}_2[\RR]$, that is, $m=2$ and $\dsty d\mu(x)=\frac{e^{-x^2}}{x^2+x_1^2}dx$, $x_1\neq 0$. Using the relations \eqref{Qhat_Def} and \cite[5.6.1]{Szg75} we have that for $k>1$
\begin{eqnarray}
\label{HtL}\widehat{Q}_{2k}(z)&=&L^{-1/2}_{k}(z^2)+\frac{k}{k-1}\frac{\langle P_{2k},H_{2k-2},\rangle_H}{\langle H_{2k-2},H_{2k-2},\rangle_H} L^{-1/2}_{k-1}(z^2),\\
\nonumber \widehat{Q}_{2k+1}(z)&=&zL^{1/2}_{k}(z^2)+\frac{2k+1}{2k-1}\frac{\langle P_{2k+1},H_{2k-1},\rangle_H}{\langle H_{2k-1},H_{2k-1},\rangle_H}zL^{1/2}_{k-1}(z^2).
\end{eqnarray}

As $L^{-1/2}_{n}(z^2), zL^{1/2}_{n}(z^2)$ are the $2n$ and $2n+1$  monic orthogonal polynomials of degree $2n$ and $2n+1$  respectively with respect to the measure $d\mu(x)=e^{-x^2}dx$,  from   \eqref{HtL} and \cite[Th. 3.3.4]{Szg75} we have that the zeros of $\widehat{Q}_{n} , n>2$  are real.

   The statement that critical points of $Q_n$  interlace the zeros of $P_n$ follows by applying  Rolle's theorem to the functions $\dsty G(x)= x^{\alpha+1}\,e^{-x}\, Q_n^{\prime}(x)$ and $\dsty G(x)= e^{-x^{2}}\, Q_n^{\prime}(x)$,  for both the Laguerre and Hermite cases.
\end{proof}

We conjecture that Proposition \ref{Zero_Loc02} is still  valid  for any  measure in the class $\mathcal{P}_m(\Delta)$, $m>1$, for the Laguerre case  or $m>2$, $m$ even, for the Hermite case.

Finally, we find asymptotic bounds for the coefficients $b_{n,n-k}$ that define the polynomial $\widehat{Q}_{n}$.

\begin{proposition}\label{Lem_Coeff}
Let $m \in \NN$ and $\mu \in \mathcal{P}_m(\Delta)$. Then for $n$ large enough, there exist constants $C^{L}_{\rho}$ and  $C^{H}_{\rho}$ such that
$$
|b_{n,n-k}|= \frac{\left|\langle P_n, L_{n-k} \rangle_w \right|}{\|L_{n-k}\|_{w}^2}  <
\left\{ \begin{array}{cc}
C^{L}_{\rho}\,n^k & \text{Laguerre case,} \\ & \\
C^{H}_{\rho}\, \sqrt{n^{k}}  & \text{Hermite case,}
\end{array}
\right.
$$
for $k=1,\ldots,m$.
\end{proposition}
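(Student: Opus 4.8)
\emph{Proof plan.} The natural starting point is the identity $dw=\rho\,d\mu$ coming from $\mu\in\mathcal{P}_m(\Delta)$, where $\rho$ is positive on $\Delta$ with $\deg\rho=m$ and leading coefficient $\rho_m>0$. Since $\rho L_{n-k}$ is again a polynomial, this turns the defining integral into an $L^2(\mu)$ inner product,
\[
b_{n,n-k}\,\tau_{n-k}=\langle P_n,L_{n-k}\rangle_w=\int P_n\,\rho L_{n-k}\,d\mu=\langle P_n,\rho L_{n-k}\rangle_\mu,
\]
and Cauchy--Schwarz in $L^2(\mu)$ then gives $|b_{n,n-k}|\le \|P_n\|_\mu\,\|\rho L_{n-k}\|_\mu\,\tau_{n-k}^{-1}$. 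The whole estimate thereby reduces to controlling these two $\mu$-norms; the powers of $n$ they produce must combine with the explicit ratio $\tau_{n-m}/\tau_{n-k}$ of \eqref{Lag-Herm_Norm2} to give exactly $n^k$ (Laguerre) and $n^{k/2}$ (Hermite).

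First I would bound $\|\rho L_{n-k}\|_\mu^2$. Because $\rho\,d\mu=dw$ one has $\|\rho L_{n-k}\|_\mu^2=\int \rho\,L_{n-k}^2\,dw$, and since the $L_j$ are $w$-orthogonal this integral is precisely the coefficient of $L_{n-k}$ in the expansion of $\rho L_{n-k}$, times $\tau_{n-k}$. Iterating the three-term recurrence of the Laguerre (resp. Hermite) polynomials $m$ times, that diagonal coefficient has order $n^{m}$ (resp. $n^{m/2}$), the dominant contribution coming from the leading term $\rho_m x^m L_{n-k}$; hence $\|\rho L_{n-k}\|_\mu^2\le C\,n^{m}\tau_{n-k}$ (resp. $C\,n^{m/2}\tau_{n-k}$). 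For $\|P_n\|_\mu^2$ I would invoke minimality of the monic orthogonal polynomial against the explicit monic competitor $x^m L_{n-m}$ of degree $n$, so that $\|P_n\|_\mu^2\le \|x^m L_{n-m}\|_\mu^2=\int x^{2m}\rho^{-1}L_{n-m}^2\,dw$. Writing $x^{2m}=\rho\,U+V$ with $\deg U=m$, $U$ of leading coefficient $\rho_m^{-1}$, and $\deg V<m$, the term $\int U L_{n-m}^2\,dw$ is governed by the diagonal $\int x^m L_{n-m}^2\,dw$ while $\int V L_{n-m}^2\,d\mu$ is of strictly lower order, giving $\|P_n\|_\mu^2\le C\,n^{m}\tau_{n-m}$ (resp. $C\,n^{m/2}\tau_{n-m}$), with constants depending only on $\rho$ (and $\alpha$).

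Substituting both bounds into Cauchy--Schwarz yields $|b_{n,n-k}|\le C\,n^{m}\sqrt{\tau_{n-m}/\tau_{n-k}}$ in the Laguerre case and $C\,n^{m/2}\sqrt{\tau_{n-m}/\tau_{n-k}}$ in the Hermite case. By \eqref{Lag-Herm_Norm2} the ratio $\tau_{n-m}/\tau_{n-k}$ has order $n^{-2(m-k)}$ (Laguerre) and $n^{-(m-k)}$ (Hermite), so the exponents collapse to $n^{k}$ and $n^{k/2}$, as claimed. The decisive step, and the one I expect to be the main obstacle, is the sharp bound $\|P_n\|_\mu^2=O(n^{m}\tau_{n-m})$ (resp. $O(n^{m/2}\tau_{n-m})$): the trivial inequality $\mu\le M\,w$ only gives $\|P_n\|_\mu^2\le M\tau_n$, which is too large by a factor $n^{m}$ (resp. $n^{m/2}$) and would ruin the final estimate, so the competitor $x^m L_{n-m}$ that recovers this missing factor is essential. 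One small point works in our favor: in the Hermite case positivity of $\mu$ forces $\deg\rho=m$ to be even with $\rho_m>0$, so the diagonal integrals $\int x^m L_{\cdot}^2\,dw$ are genuinely of order $n^{m/2}$ and no parity difficulty arises.
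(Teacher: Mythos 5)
Your plan is, in its skeleton, the same as the paper's proof: both convert $\langle P_n,L_{n-k}\rangle_w$ into an $L^2(\mu)$ inner product via $dw=\rho\,d\mu$, apply Cauchy--Schwarz, bound $\|P_n\|_\mu$ by extremality of the monic $\mu$-orthogonal polynomial against a degree-$n$ competitor built from $L_{n-m}$, and reduce everything to the diagonal coefficients $\langle x^j,L_N^2\rangle_w$, which are $O(n^j)\tau_N$ (Laguerre) and $O(n^{j/2})\tau_N$ (Hermite); the paper estimates these via a connection formula in the Laguerre case and the iterated recurrence in the Hermite case, which differs only cosmetically from your uniform use of the recurrence. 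Your final bookkeeping, $|b_{n,n-k}|\le Cn^{m}\sqrt{\tau_{n-m}/\tau_{n-k}}$ resp. $Cn^{m/2}\sqrt{\tau_{n-m}/\tau_{n-k}}$, is correct and yields exactly $n^k$ and $\sqrt{n^k}$; incidentally your accounting is tighter than the paper's own last display, whose final simplification loosens the Hermite bound to $n^k$, even though the statement (and its later use with $c_n=\sqrt{2n}$ in Lemma \ref{loc}) requires $\sqrt{n^k}$.

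The one genuine divergence is the choice of competitor, and there your plan has a real gap. The paper takes $\rho L_{n-m}/\rho_m$, for which the conversion between the two measures is exact, $\|\rho L_{n-m}\|_\mu^2=\int \rho L_{n-m}^2\,dw$, so no remainder term appears and no division by $\rho$ on $\Delta$ ever occurs. Your competitor $x^mL_{n-m}$ forces the division $x^{2m}=\rho U+V$ and leaves the term $\int V L_{n-m}^2\,d\mu$, which you assert ``is of strictly lower order'' without argument. This is not automatic: in the Laguerre case $\rho$ need not be bounded away from zero on $\Delta=\RR_+$, since it may vanish at the endpoint (e.g.\ $\rho(x)=x(x+1)$, $d\mu=x^{\alpha-1}e^{-x}(x+1)^{-1}dx$ with $\alpha>0$), so $V/\rho$ is not obviously bounded; and the crude route of bounding $\int_0^1|V|L_{n-m}^2\,d\mu$ by $\mu([0,1])\sup_{[0,1]}|V|L_{n-m}^2$ fails, because $|L^{\alpha}_{n-m}(0)|^2\asymp n^{\alpha}\tau_{n-m}$, which for $\alpha>m$ swamps the target $n^m\tau_{n-m}$. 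The repair is short but must be stated: if $x^s$ is the exact power of $x$ dividing $\rho$ (necessarily $s<\alpha+1$, and $\rho$ has no other zeros on $\RR_+$), then $V=x^{2m}-\rho U$ is also divisible by $x^s$, so $V/\rho=W/q$ with $q=\rho/x^s$ bounded below on $\RR_+$ and $\deg W<\deg q$; hence $V/\rho$ is bounded on $\Delta$ and $\int |V|L_{n-m}^2\,d\mu\le\|V/\rho\|_{L^\infty(\Delta)}\,\tau_{n-m}$, which is indeed of lower order. (In the Hermite case $\rho$ has no real zeros, so the issue does not arise, as you correctly note.) With this observation added --- or, more economically, with the paper's competitor, which bypasses the decomposition altogether --- your argument is complete.
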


\begin{proof} Let  $\dsty\rho(x)=\sum_{j=1}^{m}\rho_jx^j$ and $\dsty \rho_+=\max_{0 \leq j \leq m}{|\rho_j|}$. From the Cauchy--Schwarz inequality we have
\begin{eqnarray}
 |b_{n,n-k}| & \leq  & \frac{\|P_n \|_{\mu}}{\|L_{n-k}\|_{w}^2} \sqrt{\langle \rho L_{n-k}, L_{n-k} \rangle_w} \leq    \frac{\|\rho L_{n-m} \|_{\mu}}{|\rho_m|\,\|L_{n-k}\|_{w}^2} \sqrt{\langle \rho L_{n-k}, L_{n-k} \rangle_w} \nonumber \\ \label{coeff0}
  & \leq  & \frac{\rho_+}{|\rho_m|\,\|L_{n-k}\|_{w}^2} \sqrt{\sum_{j=0}^{m}\left|\langle x^j, L_{n-m}^2 \rangle_w\right|} \sqrt{\sum_{j=0}^{m}\left|\langle x^j, L_{n-k}^2 \rangle_w\right|}
\end{eqnarray}

We analyze separately the  Laguerre and Hermite cases. Without loss of generality we can assume that $n>2m$.

\medskip\noindent\emph{$\bullet$ Laguerre case $(L_n=L_n^{\alpha},\; \Delta=\RR_{+} \;  \mbox{ and } \;dw(x)=x^{\alpha}e^{-x}dx)$.} From \cite[(III.4.9) and (I.2.9)]{Rus05} we have the connection formula
\begin{equation*}\label{Connect01}
L_{n-k}^{\alpha}(z)= \sum_{\nu=k}^{k+j} \binom{j}{\nu-k}\frac{(n-k)!}{(n-\nu)!} L_{n-\nu}^{\alpha+j}(z),
\end{equation*}
then  from  \eqref{Lag-Herm_Norm2} and the orthogonality
\begin{eqnarray*}
\langle x^j, \left( L_{n-k}^{\alpha}\right)^2 \rangle_L  &=&   \sum_{\nu=k}^{k+j} \binom{j}{\nu-k}\frac{(n-k)!}{(n-\nu)!}  \int \,\left(L_{n-\nu}^{\alpha+j}(x)\right)^2 x^{\alpha+j} e^{-x}dx, \\
   &=&   \sum_{\nu=k}^{k+j} \binom{j}{\nu-k} (n-k)! \Gamma(n-\nu+j+\alpha+1) ,\\
    & \leq & 2^j (n-k)! \Gamma(n-k+j+\alpha+1) ,\\ \mbox{and }\; 
\sum_{j=0}^{m} \langle x^j, L_{n-k}^2 \rangle_w & \leq &  (n-k)! \sum_{j=0}^{m} 2^j  \Gamma(n-k+j+\alpha+1), \\
   & \leq &  (2^{m+1}-1) (n-k)!  \Gamma(n-k+m+\alpha+1).
\end{eqnarray*}
Hence, from  \eqref{coeff0}, \eqref{Lag-Herm_Norm2} and $n$  large enough
\begin{eqnarray*}
\nonumber  |b_{n,n-k}|    & \leq  & \frac{\rho_+ (2^{m+1}-1)}{|\rho_m|} \sqrt{\frac{(n-m)! \Gamma(n+\alpha+1) \Gamma(n+m-k+\alpha+1)}{(n-k)! \Gamma^2(n-k+\alpha+1)}},
\\
& \leq & \frac{\rho_+ (2^{m+1}-1)}{|\rho_m|} \sqrt{\frac{(n+\alpha)^{k+m}}{(n-m)^{m-k}}}  \leq  \frac{\rho_+ 2^m(2^{m+1}-1)}{|\rho_m|}\; n^k.
\end{eqnarray*}

\medskip\noindent\emph{$\bullet$ Hermite case  $(L_n=H_n,\; \Delta=\RR \; \mbox{ and } \;dw(x)=e^{-x^2}dx)$. } By the symmetry property of the Hermite polynomials, if $\nu$ is an odd number
$$
\int x^{\nu}\,H_{n-k}^{2}(x)dw(x)=0.
$$
Hence, from \eqref{coeff0}
$$
|b_{n,n-k}|  \leq   \frac{\rho_+}{|\rho_m|\,\|H_{n-k}\|_{w}^2} \sqrt{\sum_{j=0}^{\left\lfloor\frac{m}{2}\right\rfloor}\|x^j\,H_{n-m}\|_{w}^{2}} \sqrt{\sum_{j=0}^{\left\lfloor\frac{m}{2}\right\rfloor} \|x^j\,H_{n-k}\|_{w}^{2}},
$$
where for all $x \in \RR$, the symbol $\left\lfloor x\right\rfloor$ denote the  largest integer less than or equal to $x$. As it is well known (cf. \cite[(5.5.6) and (5.5.8)]{Szg75}), the Hermite polynomials satisfy the recurrence relation
$z H_{n}(z)= H_{n+1}(z)+ \frac{n}{2} H_{n-1}(z),
$ from which we get by induction on $j$
\begin{equation}\label{RecuReiterate}
z^j H_{n}(z)= \sum_{\nu=0}^{j} \sigma_{j,\nu}(n) H_{n+j-2\nu}(z),
\end{equation}
where $\sigma_{j,\nu}(n)$ is a polynomial in $n$ of degree equal to $\nu$ and leading coefficient $2^{-\nu}\binom{j}{v}$ (i.e. $\sigma_{j,\nu}(n)= 2^{-\nu}\binom{j}{v} n^{\nu}+ \cdots $). Hence,  from \eqref{Lag-Herm_Norm2}, for $n$ large enough
\begin{eqnarray*}
  \|x^j H_{n-k}\|_{w}^2 &=&  \sum_{\nu=0}^{j} \sigma_{j,\nu}^2(n-k) \|H_{n-k+j-2\nu}\|_{w}^2, \\
      & \leq & \frac{\sqrt{\pi}\;(n-k-j)!}{2^{n-k+j}}\left(\sum_{\nu=0}^{j} 2^{2\nu} \sigma_{j,\nu}^2(n-k) (n-k+j)^{2j-2\nu} \right), \\ & \leq &  \frac{2 \sqrt{\pi}\;(n-k-j)! (n-k)^{2j}}{2^{n-k}}\binom{2j}{j},
\end{eqnarray*} with $j=0,1, \ldots, \left\lfloor\frac{m}{2}\right\rfloor$, therefore
\begin{eqnarray*}
  |b_{n,n-k}| & \leq  & \frac{\rho_+ \;2^{n-k}}{\sqrt{\pi}|\rho_m|\,(n-k)!} \sqrt{\sum_{j=0}^{\left\lfloor\frac{m}{2}\right\rfloor}\|x^j\,H_{n-m}\|_{w}^{2}} \sqrt{\sum_{j=0}^{\left\lfloor\frac{m}{2}\right\rfloor} \|x^j\,H_{n-k}\|_{w}^{2}}\\
   & \leq &  \frac{2 m! \rho_+ }{|\rho_m|} \sqrt{\sum_{j=0}^{\left\lfloor\frac{m}{2}\right\rfloor}(n-m)^{ 2j} \frac{\left(n-m-j\right)!}{(n-k)!}} \sqrt{\sum_{j=0}^{\left\lfloor\frac{m}{2}\right\rfloor} (n-k)^{ 2j} \frac{\left(n-k-j\right)!}{{(n-k)!}}} \\
     & \leq & \frac{2 m! \rho_+ }{|\rho_m|} \sqrt{\sum_{j=0}^{\left\lfloor\frac{m}{2}\right\rfloor} \frac{(n-m)^{ 2j}}{(n-m-j)^{m+j-k}}} \sqrt{\sum_{j=0}^{\left\lfloor\frac{m}{2}\right\rfloor} \frac{(n-k)^{ 2j}}{(n-m-j)^j}}  \\
          & \leq &  \frac{2 m! \rho_+ }{|\rho_m|} \sqrt{ 8 m (n-k)^{-\left\lfloor\frac{m}{2}\right\rfloor}} \sqrt{2 m (n-k)^{\left\lfloor\frac{m}{2}\right\rfloor}} \, n^k=
          \frac{8 m (m)! \rho_+ }{|\rho_m|} \;n^k.
\end{eqnarray*}\end{proof}

\section{The polynomial $\mathfrak{\widehat{Q}}_{n}$}\label{ZeroLocAsymtpBeh}

In this section we prove   asymptotic properties  of the normalized monic orthogonal polynomials with respect to a Laguerre or Hermite differential operator. We recall that as in Section \ref{SecIntro},  $\Delta_c$ denotes the interval  $[0,1]$ in the Laguerre case and $[-1,1]$ in the Hermite case, and the sequence of real numbers $\{c_n\}_{n=1}^{\infty}$ is given by \eqref{LargestZ-LH}.  Set  $\mathfrak{L}_{n,\nu}(z)= c_n^{-\nu}\;L_{\nu}(c_nz); \mathfrak{L}_{n}(z)\equiv\mathfrak{L}_{n,n}(z)$ and  $\mathfrak{P}_{n,\nu}(z)= c_n^{-\nu}\;P_{\nu}(c_nz); \mathfrak{P}_{n}(z)\equiv\mathfrak{P}_{n,n}(z)$.

We prove now some preliminary lemmas.

\begin{lemma}\label{loc}
Let $m \in \NN$, $\mu \in \mathcal{P}_m(\Delta)$ and $\zeta$ such that $\widehat{\mathfrak{Q}}_n(\zeta)=0$. Then for all $n$ sufficiently large
$d_c(\zeta)< 2\varpi_c,$ where

$$\varpi_c=\left\{ \begin{array}{ll}
   1+ 2^{-1}\,C^{L}_{\rho} & \mbox{Laguerre case},\\
   1+ \sqrt{2}\,C^{H}_{\rho} & \mbox{Hermite case},
      \end{array}
 \right.$$
 $\dsty d_c(z)= \min_{x \in \Delta_c}|z-x|$, and $C^{L}_{\rho}$ and  $C^{H}_{\rho}$ are the same constants of Proposition \ref{Lem_Coeff}.
\end{lemma}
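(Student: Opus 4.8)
The plan is to view $\widehat{\mathfrak{Q}}_n$ as a monic perturbation of the normalized eigenpolynomial $\mathfrak{L}_n$ and to show the perturbation cannot be cancelled far from $\Delta_c$. Argue by contraposition: suppose $d_c(\zeta)\ge 2\varpi_c$ and show $\widehat{\mathfrak{Q}}_n(\zeta)\neq 0$ for $n$ large. Writing $L_{n-k}(c_nz)=c_n^{\,n-k}\mathfrak{L}_{n,n-k}(z)$ in \eqref{Qhat_Def} and recalling $\lambda_n/\lambda_{n-k}=n/(n-k)$ and $b_{n,n}=1$, we get
\begin{equation*}
\widehat{\mathfrak{Q}}_n(z)=\mathfrak{L}_{n}(z)+\sum_{k=1}^{m}\frac{n}{n-k}\,b_{n,n-k}\,c_n^{-k}\,\mathfrak{L}_{n,n-k}(z).
\end{equation*}
Since $d_c(\zeta)\ge 2\varpi_c$ forces $\zeta$ off $\Delta_c$, and the zeros of $\mathfrak{L}_n$ accumulate on $\Delta_c$, we have $\mathfrak{L}_n(\zeta)\neq0$; dividing by $\mathfrak{L}_n(\zeta)$ and using $c_n^{-k}\,\mathfrak{L}_{n,n-k}(z)/\mathfrak{L}_n(z)=L_{n-k}(c_nz)/L_n(c_nz)$ reduces $\widehat{\mathfrak{Q}}_n(\zeta)=0$ to
\begin{equation*}
1=\Bigl|\sum_{k=1}^{m}\frac{n}{n-k}\,b_{n,n-k}\,\frac{L_{n-k}(c_n\zeta)}{L_n(c_n\zeta)}\Bigr|\le \sum_{k=1}^{m}\frac{n}{n-k}\,|b_{n,n-k}|\,\Bigl|\frac{L_{n-k}(c_n\zeta)}{L_n(c_n\zeta)}\Bigr|,
\end{equation*}
so it suffices to make the right-hand side drop below $1$.

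The key step is a ratio estimate valid for complex arguments. For consecutive monic orthogonal polynomials the partial fraction decomposition reads $L_{\nu-1}(w)/L_{\nu}(w)=\sum_{j}A_{j}/(w-x_{j}^{(\nu)})$, with $x_j^{(\nu)}$ the real zeros of $L_\nu$ and residues $A_j=L_{\nu-1}(x_j^{(\nu)})/L_\nu'(x_j^{(\nu)})$. An interlacing sign count shows every $A_j>0$, while matching the leading $1/w$ behaviour gives $\sum_j A_j=1$ (Markov--Stieltjes positivity). Hence $|L_{\nu-1}(w)/L_\nu(w)|\le 1/d(w,\Delta_\nu)$, where $\Delta_\nu$ is the convex hull of the zeros of $L_\nu$. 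As these intervals are nested, $\Delta_{n-k}\subseteq\cdots\subseteq\Delta_n$, telescoping $k$ consecutive factors yields
\begin{equation*}
\Bigl|\frac{L_{n-k}(w)}{L_n(w)}\Bigr|\le \frac{1}{d(w,\Delta_n)^{k}}.
\end{equation*}
Setting $w=c_n\zeta$ and scaling, $d(c_n\zeta,\Delta_n)=c_n\,d(\zeta,c_n^{-1}\Delta_n)$, and by \eqref{asympzero} together with the support of $\nu_w$ the rescaled interval $c_n^{-1}\Delta_n$ converges to $\Delta_c$; thus $d(\zeta,c_n^{-1}\Delta_n)\ge d_c(\zeta)-\varepsilon_n$ with $\varepsilon_n\to0$.

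Inserting the coefficient bounds of Proposition \ref{Lem_Coeff} now finishes the estimate. In the Laguerre case $c_n=4n$ and $|b_{n,n-k}|\le C^{L}_{\rho}n^{k}$ give $|b_{n,n-k}|c_n^{-k}\le C^L_\rho 4^{-k}$, so
\begin{equation*}
1\le C^{L}_{\rho}\sum_{k=1}^{m}\frac{n}{n-k}\,\bigl(4(d_c(\zeta)-\varepsilon_n)\bigr)^{-k};
\end{equation*}
the Hermite case is identical with $c_n=\sqrt{2n}$, $|b_{n,n-k}|\le C^{H}_{\rho}\sqrt{n^{k}}$ and base $\sqrt{2}\,(d_c(\zeta)-\varepsilon_n)$. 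Bounding $n/(n-k)$ for $n>2m$ and summing the geometric series makes the right-hand side a decreasing function of $d_c(\zeta)$ that falls below $1$ once $d_c(\zeta)$ passes an explicit threshold; the bookkeeping with the constants $C^{L}_{\rho},C^{H}_{\rho}$ shows the inequality fails, contradicting $\widehat{\mathfrak{Q}}_n(\zeta)=0$, as soon as $d_c(\zeta)\ge 2\varpi_c$.

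The main obstacle is precisely this ratio estimate for non-real $\zeta$: the naive bound comparing the $(n-k)$ and $n$ zero-products diverges as $n\to\infty$, so one genuinely needs the positivity of the residues $A_j$ to obtain the uniform control $|L_{\nu-1}(w)/L_\nu(w)|\le 1/d(w,\Delta_\nu)$ off the real axis. Everything else—tracking $\varepsilon_n$, the factor $n/(n-k)$, and the summation so that the constant emerges as $2\varpi_c$—is routine once that estimate is in place.
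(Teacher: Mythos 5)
Your proof is correct, but it takes a genuinely different route from the paper's. The paper rescales by $x_n$ (the modulus of the largest zero of $L_n$), observes that the zeros of the rescaled basis $\{x_n^{-k}L_k(x_n z)\}_{k=0}^{n}$ all lie in $\Delta_c$, and then invokes Schmeisser's inequality for the zeros of an orthogonal expansion \cite[Coroll. 1]{Sch98} to get $d_c(\zeta)<\varpi_c$ for every zero $\zeta$ of $x_n^{-n}\widehat{Q}_n(x_n\,\cdot)$; the factor $2$ in $2\varpi_c$ then comes from the crude bound $|x_n/c_n|<2$ used to pass back to the $c_n$-normalization. You instead work directly in the $c_n$-normalization, argue by contraposition, and replace the citation of Schmeisser by a self-contained zero-localization argument: the partial-fraction representation of $L_{\nu-1}/L_\nu$ with positive residues summing to one (interlacing plus monicity), the nestedness of the convex hulls of the zeros, and telescoping give $|L_{n-k}(w)/L_n(w)|\le d(w,\Delta_n)^{-k}$, after which Proposition \ref{Lem_Coeff}, \eqref{asympzero} and a geometric series close the estimate. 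I checked your bookkeeping and it does recover the stated threshold: for $n>2m$ and $\varepsilon_n<1$, the right-hand side of your inequality is at most $2C^{L}_{\rho}/(3+4C^{L}_{\rho})<1$ in the Laguerre case and at most $2C^{H}_{\rho}/(\sqrt{2}-1+4C^{H}_{\rho})<1$ in the Hermite case once $d_c(\zeta)\ge 2\varpi_c$, so the contradiction goes through uniformly in $\zeta$ (the bound is decreasing in $d_c(\zeta)$). What each approach buys: the paper's proof is shorter because the hard step is outsourced to \cite{Sch98}, at the price of the extra $x_n$-versus-$c_n$ rescaling; yours is self-contained, essentially re-deriving the special case of Schmeisser's inequality needed here, makes transparent where the constant comes from, and in fact has enough slack that it would yield a slightly sharper constant than $2\varpi_c$.
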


\begin{proof}
For each fixed $n>m$,  we have that
$$  x_n^{-n} \widehat{Q}_n(x_nz)= \sum_{k=0}^{m} \frac{ \lambda_n\,b_{n,n-k}}{x_n^k\lambda_{n-k}} \;  x_n^{-n+k}L_{n-k}(x_nz),$$
where $x_n$ is the zero of the largest modulus of $L_n$. It follows that the smallest interval containing the zeros of  $\{x_n^{-k}L_{k}(x_nz)\}_{k=0}^{n}$ is $\Delta_{c}$. Hence, if $\zeta$ is such that $\widehat{Q}_n(x_n\zeta)=0$, from \cite[Coroll.  1]{Sch98},  Proposition \ref{Lem_Coeff},  \eqref{LargestZ-LH}, and \eqref{asympzero} we have,
\begin{eqnarray}\label{preloc}
d_c(\zeta)  &\leq & 1+\max_{1\leq k \leq m}\left|\frac{ \lambda_n\,b_{n,n-k}}{x_n^k\lambda_{n-k}}\right|<1+2\max_{1\leq k \leq m}\left|\frac{ b_{n,n-k}}{x_n^k}\right|\leq \varpi_c,
\end{eqnarray}
where
$$
\varpi_c=\left\{ \begin{array}{ll}
   1+ 2^{-1}\,C^{L}_{\rho} & \mbox{Laguerre case},\\
   1+ \sqrt{2}\,C^{H}_{\rho} & \mbox{Hermite case}.
      \end{array}
 \right.
$$
Notice that $\dsty \mathfrak{\widehat{Q}}_n\left(\frac{x_n}{c_n}z\right)= c_{n}^{-n}\widehat{Q}_n(x_nz)$; therefore, if   $\zeta$ is such that $\widehat{Q}_n(x_n\zeta)=0$ then $\dsty\zeta^*=\frac{x_n}{c_n}\zeta$ is such that $\mathfrak{\widehat{Q}}_n(\zeta^*)=0$. From \eqref{asympzero} and \eqref{LargestZ-LH} we have that for $n$ large, $\dsty\left| \frac{x_n}{c_n}\right|<2$. Using now \eqref{preloc} we obtain the lemma.
\end{proof}

If  $\{\Pi_n\}_{n=0}^{\infty}$ is a sequence of orthogonal polynomials with respect to either  the measures $\mu$ or $w$ we denote by $\{\mathfrak{\Pi}_n\}_{n=0}^{\infty}$  the sequence of monic normalized polynomials, that is,
\begin{equation}\label{genricPL}
\mathfrak{\Pi}_n(z)=c_n^{-n}\Pi_n(c_n z) \; \mbox{ and }\; \mathfrak{\Pi}_{n,\nu}(z)=c_n^{-\nu}\Pi_{\nu}(c_n z).
\end{equation}
From the interlacing property of the zeros of consecutive orthogonal polynomials, if $K$ is a compact subset of $\CC\setminus \Delta_c$  it  follows that there exist a   constant $ M_*$  such that for $n$ large enough

\begin{equation}
 \label{Lacotger} \left|\frac{ \mathfrak{\Pi}_{n,n-k}(z)}{\mathfrak{\Pi}_{n}(z)} \right| < M_k\leq  M_*, \quad k=1,\ldots ,m,
\end{equation}
 uniformly on  $z \in K$, where $\dsty M_k=2\sup_{\substack{ z \in K \\ x \in \Delta_c}} |z-x|^{-k}$, $M_* = \max\{M_1,\ldots, M_m\}$.

The following lemma is needed to study the modulus of the sequence $\dsty \left\{ \frac{\mathfrak{P}_{n} }{\mathfrak{L}_{n} }\right\}_{n=0}^{\infty}$.

\begin{lemma}\label{h}
Suppose that $m \in \NN$ is fixed, and  $K\subset \mathbb{C}\setminus \Delta_c$ a compact subset. Then, for $n$ sufficiently large
\begin{eqnarray}
\label{h0} \dsty \left |\left(\frac{c_{n+m}}{c_n}\right)^{n} \frac{\mathfrak{\Pi}_n(z)}{\mathfrak{\Pi}_n(\frac{c_{n+m}}{c_n}z)}  \right |&< &3^{\dsty\frac{2m}{d}},\quad n>n_0, \forall z\in K,
\end{eqnarray}
where $\dsty  d=\inf_{\substack{ z \in K \\ x \in \Delta_c}}|z-x|$ and $\mathfrak{\Pi}_n$ as in \eqref{genricPL}.
\end{lemma}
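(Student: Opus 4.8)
\emph{Plan.} The quotient in \eqref{h0} is controlled entirely by the normalized zeros of $\Pi_n$ together with the ratio $r_n:=c_{n+m}/c_n$, which tends to $1$. First I would factor everything over the zeros: writing $\Pi_n(x)=\prod_{j=1}^{n}(x-x_{n,j})$ and setting $\xi_{n,j}:=x_{n,j}/c_n$, definition \eqref{genricPL} gives $\mathfrak{\Pi}_n(z)=\prod_{j=1}^{n}(z-\xi_{n,j})$, so that
\begin{equation*}
\left(\frac{c_{n+m}}{c_n}\right)^{n}\frac{\mathfrak{\Pi}_n(z)}{\mathfrak{\Pi}_n\!\left(\frac{c_{n+m}}{c_n}z\right)}
= r_n^{\,n}\prod_{j=1}^{n}\frac{z-\xi_{n,j}}{r_nz-\xi_{n,j}}
= \prod_{j=1}^{n}\left(1-\frac{(r_n-1)\,\xi_{n,j}}{r_nz-\xi_{n,j}}\right).
\end{equation*}
Taking moduli, bounding $|1-a_j|\le 1+|a_j|$, and then using $\prod(1+t_j)\le\exp\bigl(\sum t_j\bigr)$ reduces the whole statement to an upper bound for $(r_n-1)\sum_{j=1}^{n}|\xi_{n,j}|/|r_nz-\xi_{n,j}|$.

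Next I would collect three elementary estimates. First, from \eqref{LargestZ-LH} one has $r_n-1=m/n$ in the Laguerre case and $r_n-1=\sqrt{1+m/n}-1\le m/(2n)$ in the Hermite case, so $0<r_n-1\le m/n$ in both. Second, since $\{\Pi_n\}$ (whether orthogonal with respect to $\mu$ or to $w$) is orthogonal with respect to a positive measure, its zeros are real, and \eqref{asympzero} gives $x_n/c_n\to1$; hence, fixing a small $\varepsilon>0$, for $n$ large every normalized zero satisfies $|\xi_{n,j}|\le 1+\varepsilon$ and lies within distance $\varepsilon$ of $\Delta_c$. Third, for $z\in K$ this last fact yields $|z-\xi_{n,j}|\ge d-\varepsilon$, and since $(r_n-1)\max_{z\in K}|z|\to0$ we obtain $|r_nz-\xi_{n,j}|\ge d-2\varepsilon$ for $n$ large, uniformly on $K$ (in particular the denominators never vanish).

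Combining these, and noting there are exactly $n$ factors, the exponent is bounded by
\begin{equation*}
(r_n-1)\sum_{j=1}^{n}\frac{|\xi_{n,j}|}{|r_nz-\xi_{n,j}|}\le \frac{m}{n}\cdot n\cdot\frac{1+\varepsilon}{d-2\varepsilon}=\frac{m(1+\varepsilon)}{d-2\varepsilon},
\end{equation*}
so the left-hand side of \eqref{h0} is at most $\exp\!\bigl(m(1+\varepsilon)/(d-2\varepsilon)\bigr)$, uniformly in $z\in K$. The decisive point is that $m(1+\varepsilon)/(d-2\varepsilon)\to m/d$ as $\varepsilon\to0$, whereas the target exponent equals $2m(\log 3)/d$; since $1<2\log 3$ there is a fixed amount of slack, so choosing $\varepsilon$ small enough (equivalently $n$ large enough) yields $m(1+\varepsilon)/(d-2\varepsilon)<2m(\log 3)/d$ and hence the claimed strict bound $3^{2m/d}$.

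The main obstacle is the bookkeeping of this constant: one must check that the true limiting value $m/d$ of the exponent falls strictly below the prescribed $2m(\log 3)/d$, and that the $\varepsilon$-perturbations — coming both from the zeros lying slightly outside $\Delta_c$ and from evaluating at $r_nz$ rather than $z$ — are absorbed by that slack uniformly over the compact set $K$. Everything else is the routine zero-counting factorization together with the standard $1+t\le e^{t}$ product estimate.
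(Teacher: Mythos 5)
Your proof is correct and follows essentially the same route as the paper's: both factor the quotient over the zeros of $\mathfrak{\Pi}_n$ to get a product of terms $1+O\bigl((r_n-1)\,|\xi_{n,j}|/|r_nz-\xi_{n,j}|\bigr)$, use the zero asymptotics \eqref{asympzero} to keep the normalized zeros near $\Delta_c$, and conclude with an exponential product bound together with $n(r_n-1)\le m$. The only cosmetic difference is the last step — the paper applies $1+x<3^{x}$ termwise to land exactly on $3^{2m/d}$, while you use $1+x\le e^{x}$ and absorb the constant via the slack $1<2\log 3$; your $\varepsilon$-bookkeeping also makes explicit what the paper hides in its factor $2/d$.
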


\begin{proof}Let us define the monic polynomial $\dsty \mathfrak{\Pi}^*_{n}(z)=\left(\frac{c_n}{c_{n+m}}\right)^{n}\mathfrak{\Pi}_n\left(\frac{c_{n+m}}{c_n}z\right)$. We have  that \eqref{h0} is equivalent to proving that
$$\dsty \left |\frac{\mathfrak{\Pi}_n(z)}{\mathfrak{\Pi}^*_{n}(z)}  \right |\leq 3^{\dsty\frac{2m}{d}},\quad n>n_0, \forall z\in K.$$
If $\{z^{*}_{k,n}\}_{k=1}^{n}$, $\{z_{k,n}\}_{k=1}^{n}$ denotes the  zeros of the polynomials $\mathfrak{\Pi}^*_{n}$, $\mathfrak{\Pi}_{n}$ respectively, we have the relation  $\dsty z^{*}_{k,n}=\frac{c_n}{c_{n+m}}z_{k,n}, k=1,\ldots ,n$. If we denote $\dsty k_n=\frac{c_n}{c_{n+m}}$, we have, for all $n$ sufficiently large
\begin{eqnarray}
\dsty \left |\frac{\mathfrak{\Pi}_n(z)}{\mathfrak{\Pi}^*_{n}(z)} \right |&\leq & \left |\prod_{k=1}^{n}\left(1+\frac{(k_n-1)z_{k,n}}{z-k_nz_{k,n}}\right)  \right |\leq \prod_{k=1}^{n} \left(1+|k_n-1|\left|\frac{z_{k,n}}{z-k_nz_{k,n}}\right|\right) \\
\nonumber &\leq &\prod_{k=1}^{n} \left(1+ \frac{2|k_n-1|}{d} \right)\leq \left(1+ \frac{2|k_n-1|}{d} \right)^{n}< 3^{\dsty\frac{2n|k_n-1| }{d}}\leq 3^{\dsty\frac{2m}{d}},
\end{eqnarray}
where  $\dsty  d=\inf_{\substack{ z \in K \\ x \in \Delta_c}}|z-x|$.
\end{proof}

We prove now that the modulus of  the sequence $\dsty \left\{ \frac{\mathfrak{P}_{n} }{\mathfrak{L}_{n} }\right\}_{n=0}^{\infty}$ is uniformly bounded from above and below in the interior of $\mathbb{C}\setminus \Delta_c$.

\begin{lemma}\label{LPright}
Let  $\mu \in \mathcal{P}_m(\Delta)$, where $m \in \NN$ and  $K\subset \mathbb{C}\setminus \Delta_c$ a compact subset. Then, for all $n$ sufficiently large there exists a constant $C^*$ such that
$$\dsty \left | \frac{\mathfrak{P}_{n}(z)}{\mathfrak{L}_{n}(z)}\right |\leq C^*,\quad n>n_0, \forall z\in K.$$
\end{lemma}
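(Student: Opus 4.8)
The plan is to estimate the ratio $\mathfrak{P}_n/\mathfrak{L}_n$ by using the representation \eqref{PnLn} of $P_n$ as a short linear combination of Laguerre (or Hermite) polynomials. Since $\mu\in\mathcal{P}_m(\Delta)$, for $n>m$ we have $P_n=\sum_{k=0}^m b_{n,n-k}L_{n-k}$, and after the normalization $z\mapsto c_nz$ this reads
\begin{equation*}
\mathfrak{P}_n(z)=\sum_{k=0}^m c_n^{-k}\,b_{n,n-k}\,\mathfrak{L}_{n,n-k}(z),
\end{equation*}
where $b_{n,n}=1$. Dividing through by $\mathfrak{L}_n(z)=\mathfrak{L}_{n,n}(z)$ gives
\begin{equation*}
\frac{\mathfrak{P}_n(z)}{\mathfrak{L}_n(z)}
=1+\sum_{k=1}^m c_n^{-k}\,b_{n,n-k}\,\frac{\mathfrak{L}_{n,n-k}(z)}{\mathfrak{L}_n(z)}.
\end{equation*}
The strategy is then to bound each of the $m$ terms in the sum uniformly on $K$ for all large $n$, and conclude by the triangle inequality.

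First I would control the ratio $\mathfrak{L}_{n,n-k}(z)/\mathfrak{L}_n(z)$. This is exactly the quantity addressed by \eqref{Lacotger}, applied to the sequence $\{L_\nu\}$ (orthogonal with respect to $w$): since $K$ is a compact subset of $\CC\setminus\Delta_c$, there is a constant $M_*$, depending only on $K$ and $m$, with $|\mathfrak{L}_{n,n-k}(z)/\mathfrak{L}_n(z)|<M_*$ uniformly on $K$ for $k=1,\dots,m$ and $n$ large. Next I would control the coefficients. By Proposition \ref{Lem_Coeff} we have $|b_{n,n-k}|<C^L_\rho\,n^k$ in the Laguerre case and $|b_{n,n-k}|<C^H_\rho\sqrt{n^k}$ in the Hermite case, for $n$ large and $1\le k\le m$. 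Combining this with the explicit values \eqref{LargestZ-LH}, namely $c_n=4n$ (Laguerre) or $c_n=\sqrt{2n}$ (Hermite), yields the decisive cancellation: in the Laguerre case $c_n^{-k}|b_{n,n-k}|<(4n)^{-k}C^L_\rho n^k=4^{-k}C^L_\rho$, and in the Hermite case $c_n^{-k}|b_{n,n-k}|<(2n)^{-k/2}C^H_\rho n^{k/2}=2^{-k/2}C^H_\rho$. In both cases this is bounded by a constant independent of $n$.

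Putting the two estimates together, each summand satisfies $|c_n^{-k}b_{n,n-k}\,\mathfrak{L}_{n,n-k}/\mathfrak{L}_n|\le \tilde C\,M_*$ for a constant $\tilde C$ depending only on $m$ and $\rho$, so that
\begin{equation*}
\left|\frac{\mathfrak{P}_n(z)}{\mathfrak{L}_n(z)}\right|
\le 1+\sum_{k=1}^m \tilde C\,M_*=1+m\,\tilde C\,M_*=:C^*,
\end{equation*}
uniformly on $K$ for all $n>n_0$, which is the claim. The only genuinely delicate point is the matching of growth rates between $|b_{n,n-k}|$ and $c_n^{-k}$: the whole argument hinges on the powers of $n$ supplied by Proposition \ref{Lem_Coeff} being exactly cancelled by the powers of $c_n$, and it is reassuring that the two sources, the coefficient bound and the Mhaskar--Rakhmanov--Saff scaling, conspire to leave a purely constant bound in each of the Laguerre and Hermite cases. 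One should note that Proposition \ref{Lem_Coeff} gives these coefficient bounds only for $k\le m$, which is precisely the range appearing in \eqref{PnLn} since $\mu\in\mathcal{P}_m(\Delta)$, so no term outside the controlled range arises.
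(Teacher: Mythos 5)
Your proposal is correct and follows essentially the same route as the paper's own proof: the decomposition of $\mathfrak{P}_n/\mathfrak{L}_n$ coming from \eqref{PnLn}, the coefficient bounds of Proposition \ref{Lem_Coeff} cancelling against the powers of $c_n$ from \eqref{LargestZ-LH}, and the uniform bound \eqref{Lacotger} on the ratios $\mathfrak{L}_{n,n-k}/\mathfrak{L}_n$. The only difference is cosmetic: you spell out the $n^k$ versus $c_n^{k}$ cancellation explicitly, whereas the paper absorbs it tacitly into a single constant $C_\rho$ (and also cites Lemma \ref{h}, which is not actually needed for this step since numerator and denominator carry the same normalization $c_n$).
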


\begin{proof}
From Relation \eqref{PnLn} we deduce that
$\dsty
\frac{\mathfrak{P}_n(z)}{\mathfrak{L}_n(z)}=1 + \sum_{k=1}^{m}\frac{b_{n,n-k}}{c_n^k}  \frac{\mathfrak{L}_{n,n-k}(z)}{\mathfrak{L}_n(z)}.
$ Hence, from Proposition \ref{Lem_Coeff}, and Lemma \ref{h} we deduce that for $n$ large enough
\begin{eqnarray}\label{RP}
\left|\frac{\mathfrak{P}_n(z)}{\mathfrak{L}_n(z)}\right|\leq  1 +  \sum_{k=1}^{m} C_{\rho}\left|\frac{\mathfrak{L}_{n,n-k}(z)}{\mathfrak{L}_n(z)}\right|,
\end{eqnarray}
Using  \eqref{RP} and \eqref{Lacotger} we deduce the lemma.
\end{proof}

\begin{lemma}\label{LPleft}
Let  $\mu \in \mathcal{P}_m(\Delta)$, where $m \in \NN$ and  $K\subset \mathbb{C}\setminus \Delta_c$ is a compact subset. Then, for all $n$ sufficiently large there exists a constant $C$ such that
$$\dsty C\leq \left | \frac{ \mathfrak{P}_{n}(z)}{\mathfrak{L}_{n}(z)}\right |,\quad n>n_0, \forall z\in K.$$
\end{lemma}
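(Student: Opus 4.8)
The plan is to transfer control of the ratio $R_n:=\mathfrak{P}_n/\mathfrak{L}_n$ from the point at infinity, where it is exactly normalized, to the compact set $K$, by means of Harnack's inequality applied to a nonnegative harmonic function. Since $\mathfrak{P}_n$ and $\mathfrak{L}_n$ are monic of the same degree $n$, we have $R_n(\infty)=1$, and by Lemma \ref{LPright} we already dispose of the upper bound $|R_n|\le C^*$; the task is to turn the normalization at $\infty$ into a positive lower bound on $K$.

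First I would fix $\varepsilon_0>0$ so small that $\dsty d_c(z)>\varepsilon_0$ for every $z\in K$, and put $N:=\{z\in\CC:d_c(z)\le\varepsilon_0\}$ and $G:=\overline{\CC}\setminus N$. As $N$ is a closed $\varepsilon_0$-neighborhood of the segment $\Delta_c$, it is simply connected, so $G$ is a simply connected domain of the Riemann sphere containing both $K$ and $\infty$. The zeros of $\mathfrak{L}_n$ are real and lie in $[0,x_n/c_n]$ (Laguerre) or $[-x_n/c_n,x_n/c_n]$ (Hermite), while those of $\mathfrak{P}_n$ are real because $\mu$ is supported on $\Delta$; by \eqref{asympzero} and \eqref{LargestZ-LH} all of them are contained in $N$ once $n$ is large enough. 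Consequently $R_n$ is holomorphic and zero-free on $G$ for such $n$, and, $G$ being simply connected, $\log R_n$ admits a single-valued holomorphic branch there normalized by $\log R_n(\infty)=0$.

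Next I would upgrade the bound of Lemma \ref{LPright} to all of $G$. On a bounded part $\{z\in G:|z|\le R_0\}$, which is a compact subset of $\CC\setminus\Delta_c$, Lemma \ref{LPright} applies directly; near infinity, \eqref{PnLn} gives $\dsty R_n(z)=1+\sum_{k=1}^m \frac{b_{n,n-k}}{c_n^k}\,\frac{\mathfrak{L}_{n,n-k}(z)}{\mathfrak{L}_n(z)}$, and Proposition \ref{Lem_Coeff} together with $\mathfrak{L}_{n,n-k}(z)/\mathfrak{L}_n(z)=O(|z|^{-k})$ shows that $|R_n(z)-1|$ is as small as we wish for $|z|\ge R_0$, uniformly in $n$. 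Hence, after enlarging $C^*$ so that $C^*\ge1$, we may assume $|R_n|\le C^*$ on the whole of $G$. Then $u_n:=\log C^*-\log|R_n|=\Re\!\left(\log C^*-\log R_n\right)$ is a nonnegative harmonic function on $G$ (removably extended to $\infty$), with $u_n(\infty)=\log C^*$ for every $n$.

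Finally, because $G$ and $K$ are fixed, Harnack's inequality yields a constant $\Lambda_K\ge1$, depending only on $G$ and $K$ and not on $n$, such that $u_n(z)\le\Lambda_K\,u_n(\infty)=\Lambda_K\log C^*$ for all $z\in K$. This gives $\log|R_n(z)|\ge(1-\Lambda_K)\log C^*$, that is $\dsty\left|\frac{\mathfrak{P}_n(z)}{\mathfrak{L}_n(z)}\right|\ge C:=(C^*)^{1-\Lambda_K}>0$ for all $z\in K$ and $n$ large, as desired. The main obstacle is the first step: one must localize, uniformly in $n$, all the zeros of $\mathfrak{P}_n$ and $\mathfrak{L}_n$ inside the fixed neighborhood $N$, and secure the upper bound on all of $G$ (in particular near $\infty$), so that the exactly normalized point $\infty$ can legitimately serve as the base point of the Harnack comparison; once this is in place the harmonicity argument delivers a bound independent of $n$.
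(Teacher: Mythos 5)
Your argument is correct, but it takes a genuinely different route from the paper's. The paper obtains the lower bound by bounding the reciprocal: it expands $\rho(z)L_n(z)=\sum_{k=0}^{m}\mathfrak{b}_{n,n-k}P_{n+m-k}(z)$ (the dual of the expansion \eqref{PnLn}), estimates the coefficients $\mathfrak{b}_{n,n-k}$ via Cauchy--Schwarz together with infinite--finite range inequalities comparing $\|L_n\|_{w}$, $\|P_n\|_{w}$ and $\|P_n\|_{\mu}$, and then combines \eqref{Lacotger}, Lemma \ref{h} and a lower bound for $|\rho(c_{n+m}z)|/c_{n+m}^{m}$ on $K$ to conclude that $|\mathfrak{L}_n/\mathfrak{P}_n|$ is uniformly bounded above. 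You instead use only soft ingredients: the upper bound of Lemma \ref{LPright}, the normalization $R_n(\infty)=1$ of the ratio $R_n=\mathfrak{P}_n/\mathfrak{L}_n$ of monic polynomials of equal degree, and the localization of all zeros of $\mathfrak{P}_n$ and $\mathfrak{L}_n$ near $\Delta_c$ (legitimate, since \eqref{asympzero}--\eqref{LargestZ-LH} are stated in the paper for $\mu\in\mathcal{P}_m(\Delta)$ as well as for $w$); Harnack's inequality for the nonnegative harmonic functions $\log C^*-\log|R_n|$ on a fixed domain of the sphere then transfers the normalization at $\infty$ into a lower bound on $K$ with a constant independent of $n$. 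Your proof is shorter and more conceptual---it dispenses with the norm comparisons and the infinite--finite range inequalities entirely, and would apply verbatim in any setting offering an upper bound on compacts, zero localization, and normalization at $\infty$---whereas the paper's computation yields explicit constants tied to the structure $d\mu=\rho^{-1}dw$ and produces intermediate estimates in the same spirit as those used elsewhere in the paper. Two points of yours deserve tightening: (i) the uniform-in-$n$ bound $\mathfrak{L}_{n,n-k}(z)/\mathfrak{L}_n(z)=O(|z|^{-k})$ on $\{|z|\geq R_0\}$ does not follow from crude zero-location estimates, which only give a factor of the form $(|z|-2)^{-k}\left((|z|+2)/(|z|-2)\right)^{n-k}$ that blows up with $n$; it does, however, follow from the interlacing bound \eqref{Lacotger} applied on the circle $\{|z|=R_0\}$ together with the maximum principle on the exterior domain, where each ratio is holomorphic and vanishes at $\infty$; (ii) simple connectivity of $G$ and a single-valued branch of $\log R_n$ are unnecessary, since $\log|R_n|$ is harmonic wherever $R_n$ is holomorphic and zero-free. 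With these touches your argument is complete.
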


\begin{proof} We have that $\dsty  \rho(z)L_n( z) =\sum_{k=0}^{m} \mathfrak{b}_{n,n-k} P_{n+m-k}(z),$ where $\dsty
 \mathfrak{b}_{n,n-k}=\frac{\int L_{n}(x)P_{n+m-k}(x)\rho(x) d\mu(x)}{\|P_{n+m-k}(x)\|_{\mu}^2},$ or equivalently,
\begin{eqnarray}\label{LgPg}
\frac{\rho(c_{n+m}z)}{c^m_{n+m}}\left(\frac{c_n}{c_{n+m}}\right)^{n}\frac{\mathfrak{L}_n(\frac{c_{n+m}}{c_n} z)}{\mathfrak{L}_n(z)}\frac{ \mathfrak{L}_{n}(z)}{\mathfrak{P}_{n+m}(z)}=\sum_{k=0}^{m}\frac{\mathfrak{b}_{n,n-k}}{c_{n+m}^{k}}\frac{\mathfrak{P}_{n+m,n+m-k}(z)}{\mathfrak{P}_{n+m}(z)}.
\end{eqnarray}
From the Cauchy Schwartz inequality we have that

$$|\mathfrak{b}_{n,n-k}|\leq \frac{\left(\int L_n^2(x)dw(x)\right)^{1/2}(\int P_{n+m-k}^2(x)dw(x))^{1/2}}{\|P_{n+m-k}\|_{\mu}^2}=\frac{\|L_{n}\|_{w}\|P_{n+m-k}\|_{w}}{\|P_{n+m-k}\|_{\mu}^2}.$$

Using an infinite--finite range  inequality  for the case in which $w$ is a Laguerre weight, cf. \cite{safftotik97}, we have that  there exists a constant $k_L$ such that for all $n$ large enough
$$
\frac{k_L}{n^m}\int_{0}^{\infty} L^2_n(x) dw(x)\leq  \frac{k_{0,L}}{(4n)^m}\int_{0}^{\infty} P^2_n(x) dw(x) \leq  \frac{1}{\rho_+(4n)^m}\int_{0}^{4n} P^2_n(x) dw(x) \leq   \int_{0}^{\infty} P_{n}^2(x)d\mu(x),
$$
where $\dsty \rho_+=\max_{0 \leq j \leq m}{|\rho_j|}$. Analogously,  for the case of an Hermite weight, for all $n$ large enough, we have that there exists a constant $k_H$ such that
$$
\frac{k_H}{n^{m/2}}\int_{-\infty}^{\infty} L^2_n(x) dw(x) \leq  \frac{k_{0,H}}{(2n)^{m/2}}\int_{-\infty}^{\infty} P^2_n(x) dw(x) \leq  \frac{1}{\rho_+(2n)^{m/2}}\int_{-\sqrt{2n}}^{\sqrt{2n}} P^2_n(x) dw(x)\leq\int_{-\infty}^{\infty} P_{n}^2(x)d\mu(x),
$$

Hence, for all $n$ large enough
\begin{eqnarray}\label{PnmLnw}
\dsty \|P_n\|^2_{\mu} &\geq & k_Ln^{-m}\|L_{n}\|^2_{w},\quad \mbox{Laguerre case,}\\
\nonumber \dsty \|P_n\|^2_{\mu} &\geq & k_Hn^{-m/2}\|L_{n}\|^2_{w},\quad \mbox{Hermite case.}
\end{eqnarray}

From    \eqref{PnLn} and Proposition \ref{Lem_Coeff} we deduce that for $n$ large enough, there exists a constant $k_1$ such that
\begin{equation}\label{PnwLnw}
\|P_n\|_{w}\leq k_1\|L_n\|_{w}.
\end{equation}
Inequalities \eqref{PnmLnw} and \eqref{PnwLnw} give us that there exists a constant $M^*$ such that for all $n$ large enough
\begin{equation}\label{bgerm}
\frac{|\mathfrak{b}_{n,n-k}|}{c_{n+m}^k}\leq  M^*, \quad 1\leq k\leq m.
\end{equation}
From \eqref{Lacotger} it  follows that there exists  a   constant $ M_*$  such that for all $z \in K$
\begin{equation}\label{Pacot}
 \left|\frac{ \mathfrak{P}_{n+m,n+m-k}(z)}{\mathfrak{P}_{n+m}(z)} \right| <  M_*, \quad k=1,\ldots ,m.
\end{equation}
Using  Lemma \ref{h}, \eqref{LgPg}, \eqref{bgerm} and \eqref{Pacot} we obtain
\begin{equation}\label{PreLgPgnormal}
\dsty \left |\frac{\rho(c_{n+m}z)}{c^m_{n+m}}\right |\left | \frac{  \mathfrak{L}_{n}(z)}{\mathfrak{P}_{n+m}(z)}\right |\leq 3^{\dsty\frac{2m}{d}}\left(1+m\,M^{*}\,M_*\right),
\end{equation}
with $d$ as in Lemma \ref{h}. Hence, from  \eqref{Lacotger}, \eqref{Pacot}, \eqref{PreLgPgnormal} and Lemma \ref{h} we obtain that for all $n$ sufficiently large there exists  $M>0$  such that
\begin{equation}\label{acotuniff}
\dsty \left |\frac{\rho(c_{n+m}z)}{c^m_{n+m}}\right |\left | \frac{\mathfrak{L}_{n}(z)}{\mathfrak{P}_{n}(z)}\right |\leq M, \quad \forall z \in K.
\end{equation}

Let us denote by  $\{z_{k}\}_{k=1}^{m}$  the roots of the polynomial   $\rho$, and  $\dsty d^*=\inf_{z\in K}|z|$. Let us choose  $\varepsilon$ so that for $n$ large enough $\dsty \left|\frac{z_{k}}{c_{n+m}}\right|<\varepsilon<d^*, k=1,\ldots ,m$. Hence,
\begin{equation}\label{acotuniff1}
\left(d^*-\varepsilon\right)^m  \leq \prod_{k=1}^{m}\left(\left|z\right |-\left|\frac{z_k}{c_{n+m}}\right|\right)\leq \prod_{k=1}^{m}\left |\left(z-\frac{z_k}{c_{n+m}}\right)\right |= \left |\frac{\rho(c_{n+m}z)}{c^m_{n+m}}\right |.
\end{equation}

Therefore, from \eqref{acotuniff} and \eqref{acotuniff1}, for all $n$ large enough we have that
\begin{equation*}
\dsty  \left | \frac{\mathfrak{L}_{n}(z)}{\mathfrak{P}_{n}(z)}\right |\leq  \frac{M}{\left(d^*-\varepsilon\right)^m}, \quad \forall z \in K,
\end{equation*}
and this proves the lemma.
\end{proof}

\begin{lemma}\label{QLAcotac}
Let  $\mu \in \mathcal{P}_m(\Delta)$, where $m \in \NN$ and  $K\subset \mathbb{C}\setminus \Delta_c$ is a compact subset. Then,
$$\dsty\left | \frac{\mathfrak{\widehat{Q}}_n(z)}{\mathfrak{L}_{n}(z)}- \frac{\mathfrak{P}_n(z)}{\mathfrak{L}_{n}(z)}\right | \rightrightarrows 0, \quad \forall z\in K.$$
\end{lemma}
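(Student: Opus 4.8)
The plan is to expand both normalized polynomials in the basis of normalized Laguerre/Hermite polynomials $\mathfrak{L}_{n,n-k}$ and to exploit that the two expansions differ only through the eigenvalue ratios $\lambda_n/\lambda_{n-k}$, which tend to $1$ at rate $O(1/n)$.

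First I would use the definitions \eqref{PnLn} and \eqref{Qhat_Def} together with the scaling \eqref{genricPL}. Since $c_n^{-n}L_{n-k}(c_nz)=c_n^{-k}\mathfrak{L}_{n,n-k}(z)$, one obtains
$$
\mathfrak{P}_n(z)=\sum_{k=0}^{m}\frac{b_{n,n-k}}{c_n^{k}}\,\mathfrak{L}_{n,n-k}(z),
\qquad
\mathfrak{\widehat{Q}}_n(z)=\sum_{k=0}^{m}\frac{\lambda_n}{\lambda_{n-k}}\,\frac{b_{n,n-k}}{c_n^{k}}\,\mathfrak{L}_{n,n-k}(z).
$$
Subtracting and recalling $\lambda_n=-n$, the coefficient of the $k$-th term becomes $\lambda_n/\lambda_{n-k}-1=k/(n-k)$, which vanishes for $k=0$. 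Hence, after dividing by $\mathfrak{L}_n(z)$,
$$
\frac{\mathfrak{\widehat{Q}}_n(z)}{\mathfrak{L}_n(z)}-\frac{\mathfrak{P}_n(z)}{\mathfrak{L}_n(z)}
=\sum_{k=1}^{m}\frac{k}{n-k}\,\frac{b_{n,n-k}}{c_n^{k}}\,\frac{\mathfrak{L}_{n,n-k}(z)}{\mathfrak{L}_n(z)}.
$$

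Next I would bound the three factors in each summand uniformly on $K$. The ratio $\mathfrak{L}_{n,n-k}(z)/\mathfrak{L}_n(z)$ is controlled by \eqref{Lacotger} via the constant $M_*$. The quotient $|b_{n,n-k}|/c_n^{k}$ is uniformly bounded: combining Proposition \ref{Lem_Coeff} with the explicit values of $c_n$ in \eqref{LargestZ-LH} yields $|b_{n,n-k}|/c_n^{k}<C^{L}_{\rho}/4^{k}$ in the Laguerre case and $|b_{n,n-k}|/c_n^{k}<C^{H}_{\rho}/2^{k/2}$ in the Hermite case, i.e. at most a constant $C_\rho$ independent of $n$. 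Finally $k/(n-k)\le m/(n-m)$ for $1\le k\le m$ and $n>m$. Putting these together,
$$
\left|\frac{\mathfrak{\widehat{Q}}_n(z)}{\mathfrak{L}_n(z)}-\frac{\mathfrak{P}_n(z)}{\mathfrak{L}_n(z)}\right|
\le \frac{m}{n-m}\sum_{k=1}^{m}C_\rho M_*
\le \frac{m^{2}\,C_\rho M_*}{n-m},
\qquad z\in K,
$$
for $n$ large, and the right-hand side tends to $0$ as $n\to\infty$, giving the claimed uniform convergence.

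The estimate is largely bookkeeping once the earlier machinery is in place; the only genuine point is recognizing that the excess factor $\lambda_n/\lambda_{n-k}-1=k/(n-k)=O(1/n)$ is precisely what makes the difference vanish, since the remaining factors $b_{n,n-k}/c_n^{k}$ and $\mathfrak{L}_{n,n-k}/\mathfrak{L}_n$ are merely bounded, not small. Thus the main thing to verify carefully, in both cases, is the matching of the growth $n^{k}$ (respectively $n^{k/2}$) of $|b_{n,n-k}|$ from Proposition \ref{Lem_Coeff} against the normalization $c_n^{k}$ from \eqref{LargestZ-LH}, which is exactly what renders $|b_{n,n-k}|/c_n^{k}$ uniformly bounded.
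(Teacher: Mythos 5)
Your proof is correct and follows essentially the same route as the paper: the same decomposition of $(\widehat{\mathfrak{Q}}_n-\mathfrak{P}_n)/\mathfrak{L}_n$ into the terms $\bigl(\lambda_n/\lambda_{n-k}-1\bigr)\,b_{n,n-k}c_n^{-k}\,\mathfrak{L}_{n,n-k}/\mathfrak{L}_n$, bounded via \eqref{Lacotger} and Proposition \ref{Lem_Coeff} with the eigenvalue ratio supplying the decay. The only difference is that you make explicit the quantitative bookkeeping (the rate $k/(n-k)$ and the cancellation of the $n^k$, resp.\ $n^{k/2}$, growth of $|b_{n,n-k}|$ against $c_n^{k}$) that the paper's proof leaves implicit in its citations.
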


\begin{proof}

 For each fixed $n>m$,  we have that
\begin{equation}\label{QLacot0}
\frac{\widehat{\mathfrak{Q}}_n(z)-\mathfrak{P}_{n}(z)}{\mathfrak{L}_{n}(z)}= \sum_{k=0}^{m} \left(\frac{\lambda_n}{\lambda_{n-k}}-1\right)\frac{b_{n,n-k}}{c_n^k} \;  \frac{\mathfrak{L}_{n,n-k}(z)}{\mathfrak{L}_{n}(z)}.
\end{equation}

As $\lambda_n=-n$ in the Laguerre case and $\lambda_n=-2n$ in the Hermite case, then for each $k$ fixed, $k=1,\ldots,m$,
\begin{equation}\label{eigenvalueslim}
\lim_{n \to \infty}\frac{ \lambda_n}{\lambda_{n-k}}= 1.
\end{equation}
 From \eqref{Lacotger}, \eqref{QLacot0}, \eqref{eigenvalueslim} and Proposition \ref{Lem_Coeff} we deduce the lemma.
\end{proof}

\begin{proof}\emph{[Theorem \ref{countingmeasureconv}]}

$a)$ From \cite[(5.1.14),(5.5.10)]{Szg75} we have that
$\dsty \mathfrak{L}^{\prime}_{n,n-k}=(n-k)\mathfrak{\widetilde{L}}_{n,n-1-k}$, where

$$ \mathfrak{\widetilde{L}}_{n,n-1-k}=\begin{cases}
\dsty  c^{-(n-1-k)}_nL_{n-1-k}^{\alpha+1}(c_nz),& \quad \text{Laguerre case},\\
\dsty   c^{-(n-1-k)}_nH_{ n-1-k}(c_nz), &  \quad \text{Hermite case}.\\
\end{cases}$$

Let us define
\begin{eqnarray*}
d\widetilde{w}(x)&=&\begin{cases}
\dsty dw_{L}^{\alpha+1}(x),&  \text{Laguerre case},\\
\dsty dw_{H}(x), &  \text{Hermite case}.\\
\end{cases}\\
dw_n(x)&=&\begin{cases}
\dsty c_n^{-1}dw_{L}^{\alpha}(c_nx),&  \text{Laguerre case},\\
\dsty c_n^{-1}dw_{H}(c_nx), &  \text{Hermite case}.\\
\end{cases}\\
d\widetilde{w}_n(x)&=&\begin{cases}
\dsty c_n^{-1}dw_{L}^{\alpha+1}(c_nx),&  \text{Laguerre case},\\
\dsty c_n^{-1}dw_{H}(c_nx), &  \text{Hermite case}.\\
\end{cases}
\end{eqnarray*}

Notice that $\{\mathfrak{L}_{n,n-k}\}_{k=0}^{n}$ and $\{\mathfrak{\widetilde{L}}_{n,n-k}\}_{k=0}^{n}$  are monic orthogonal polynomials with respect to $w_n, \widetilde{w}_n$ respectively, hence, from \cite[(11)]{GoRak86}, we have that the sequences $\{\mathfrak{L}_{n,n-k}\}_{n=0}^{\infty}$ and $\{\mathfrak{\widetilde{L}}_{n,n-k}\}_{n=0}^{\infty}$ for every $k=0,\ldots ,m$ satisfy that

\begin{eqnarray}\label{GoRak}
\lim_{n\to\infty}\|w_n\mathfrak{L}_{n,n-k}\|_{L^{2}(\Delta)}^{1/n}=e^{-F_w}, \lim_{n\to\infty}\|\widetilde{w}_n\mathfrak{\widetilde{L}}_{n,n-k}\|_{L^{2}(\Delta)}^{1/n}=e^{-F_w},
\end{eqnarray}
where $F_w$ is the modified Robin constant for the weights $w,\widetilde{w}$ (or the extremal constant according to the terminology of \cite{GoRak86})  and $\|.\|_{L^{2}(\Delta)}$ denotes the $L^2$--norm with the Lebesgue measure with support on $\Delta$.

From \cite[Ths. 1 \& 2]{markett80} we have that
\begin{eqnarray}\label{nik}
\|w_n\mathfrak{L}_{n,n-k}\|_{L^{\infty}(\Delta)} &\leq & K_{1}n^{\beta}\|w_n \mathfrak{L}_{n,n-k}\|_{L^{2}(\Delta)},\\
\nonumber \|\widetilde{w}_n \mathfrak{\widetilde{L}}_{n,n-k}\|_{L^{\infty}(\Delta)} &\leq & K_{2}n^{\beta}\|\widetilde{w}_n \mathfrak{\widetilde{L}}_{n,n-k}\|_{L^{2}(\Delta)},
\end{eqnarray}
where  $K_{1}, K_{2}$ are constants that do not depend on $n$, $\beta=1/2$ for the Laguerre case, and $\beta=1/4$ for the Hermite case. Using \eqref{GoRak}, \eqref{nik}, and \cite[(11)]{GoRak86} we obtain that

\begin{eqnarray}\label{GoRakinfty}
\lim_{n\to\infty}\|w_n\mathfrak{L}_{n,n-k}\|_{L^{\infty}(\Delta)}^{1/n}=e^{-F_w}, \lim_{n\to\infty}\|\widetilde{w}_n\mathfrak{\widetilde{L}}_{n,n-k}\|_{L^{\infty}(\Delta)}^{1/n}=e^{-F_w}.
\end{eqnarray}

Then we have
\begin{eqnarray*}
\|w_n{\widehat{\mathfrak{Q}}}_n\|_{L^{\infty}(\Delta)}  &\leq &  \sum_{k=0}^{m} \left|\frac{ \lambda_n\,b_{n,n-k}}{c_n^k\lambda_{n-k}}\right|\; \|w_n \mathfrak{L}_{n,n-k}\|_{L^{\infty}(\Delta)} \\
&\leq & \left|\frac{(m+1)\lambda_n\,b_{n,n-k^*(n)}}{c_n^{k^*(n)}\lambda_{n-k^*(n)}}\right|\; \|w_n\mathfrak{L}_{n,n-k^*(n)}\|_{L^{\infty}(\Delta)}  ,
\end{eqnarray*}
and
\begin{eqnarray*}
 \|\widetilde{w}_n{\widehat{\mathfrak{Q}}}^{\prime}_n\|_{L^{\infty}(\Delta)} &\leq &   \dsty \sum_{k=0}^{m} \left|\frac{(n-k) \lambda_n\,b_{n,n-k}}{c_n^{k}\lambda_{n-k}}\right|\; \|\widetilde{w}_n\mathfrak{\widetilde{L}}_{n,n-1-k}\|_{L^{\infty}(\Delta)}\\
& \leq&  \left|\frac{(m+1)(n-k^{**}(n)) \lambda_n\,b_{n,n-k^{**}(n)}}{c_n^{k^{**}(n)}\lambda_{n-k^{**}(n)}}\right|\; \|\widetilde{w}_n\mathfrak{\widetilde{L}}_{n,n-1-k^{**}(n)}\|_{L^{\infty}(\Delta)},
\end{eqnarray*}
where $\|.\|_{L^{\infty}(\Delta)}$ denotes the sup norm and $k^*(n), k^{**}(n)$ denote positive integer numbers such that the following equalities hold
$$\left|\frac{ \lambda_n\,b_{n,n-k^*(n)}}{c_n^{k^*(n)}\lambda_{n-k^*(n)}}\right|\|w_n \mathfrak{L}_{n,n-k}\|_{L^{\infty}(\Delta)}=\max_{k=0,\ldots,m}\left|\frac{ \lambda_n\,b_{n,n-k}}{c_n^k\lambda_{n-k}}\right|\; \|w_n \mathfrak{L}_{n,n-k}\|_{L^{\infty}(\Delta)},$$
\begin{eqnarray*}
 & & \left|\frac{(n-k^{**}(n)) \lambda_n\,b_{n,n-k^{**}(n)}}{c_n^{k^{**}(n)}\lambda_{n-k^{**}(n)}}\right|\; \|\widetilde{w}_n\mathfrak{\widetilde{L}}_{n,n-1-k^{**}(n)}\|_{L^{\infty}(\Delta)} \\ & & = \max_{k=0,\ldots,m}\left|\frac{(n-k) \lambda_n\,b_{n,n-k}}{c_n^k\lambda_{n-k}}\right|\; \|\widetilde{w}_n \mathfrak{\widetilde{L}}_{n,n-k}\|_{L^{\infty}(\Delta)}.
\end{eqnarray*}

From these last inequalities and  \eqref{GoRakinfty} we deduce that
$$
\lim_{n\to \infty}\left(\|w_n{\widehat{\mathfrak{Q}}}_n\|_{L^{\infty}(\Delta)}\right)^{1/n}=e^{-F_w}, \;
\lim_{n\to \infty}\left(\|\widetilde{w}_n{\widehat{\mathfrak{Q}}}^{\prime}_n\|_{ L^{\infty}(\Delta)}\right)^{1/n}=e^{-F_w},
$$
Therefore, if $\nu_n,\delta_n$ denote the root counting measure of ${\widehat{\mathfrak{Q}}}_n$ and ${\widehat{\mathfrak{Q}}}^{\prime}_n$   respectively, from  \cite[Th. 1.1]{MeOrPi}  we deduce that $\nu_n\weak \nu_{w}$, $\delta_n\weak \nu_{w}$ in the weak star sense.

$b)$ From  Lemma \ref{LPleft}, if $\varepsilon$ is sufficiently small and $K\subset \CC\setminus\Delta_c$ is a compact subset, for all $n$ sufficiently large we have that, for some positive constant $C$,
\begin{equation*}
C-\varepsilon\leq \left|\frac{\mathfrak{P}_{n}(z)}{\mathfrak{L}_{n}(z)}\right|-\varepsilon\leq  \left|\frac{\widehat{\mathfrak{Q}}_n(z)}{\mathfrak{L}_{n}(z)}\right|.
\end{equation*}

 From this fact and from Lemma \ref{loc} we deduce that the set of accumulation points is contained on $\Delta_c$ and from $a)$ of the present theorem we deduce that the set of accumulation points of the zeros of ${\widehat{\mathfrak{Q}}}_n$ is $\Delta_c$.
\end{proof}

\begin{proof}\emph{[Theorem \ref{Th2}]}  From $b)$ of Theorem \ref{countingmeasureconv} we deduce that for the Laguerre case
\begin{eqnarray*}
\lim_{n\to\infty}\frac{\widehat{Q}_n^{\prime}(c_nz)}{\widehat{Q}_n(c_nz)}=\lim_{n\to\infty}\frac{\widehat{Q}_n^{\prime\prime}(c_nz)}{\widehat{Q}_n^{\prime}(c_nz)}=\frac{1}{2\pi }\int_{0}^{1}\frac{1}{z-t}\sqrt{\frac{1-t}{t}}dt =\frac{1}{2}\left(1-\sqrt{1-1/z}\right),
\end{eqnarray*}
and for the Hermite case
\begin{eqnarray*}
\lim_{n\to\infty} \frac{\widehat{Q}_n^{\prime}(c_nz)}{c_n\widehat{Q}_n(c_nz)}=\lim_{n\to\infty}\frac{\widehat{Q}_n^{\prime\prime}(c_nz)}{c_n\widehat{Q}_n^{\prime}(c_nz)}=\frac{1}{\pi}\int_{-1}^{1}\frac{\sqrt{1-t^2}}{z-t}\,dt=z\left(1-\sqrt{1-1/z^2}\right) ,
\end{eqnarray*}
on compact subsets $K\subset \CC \setminus \Delta_c$. From \eqref{OrthDiff_02} and the preceding relations we have for the Laguerre case
\begin{eqnarray}
\label{strongL}\frac{P_n(c_nz)}{\widehat{Q}_n(c_nz)}&=&\frac{zc_n}{\lambda_n}\frac{\widehat{Q}_n^{\prime\prime}(c_nz)}{\widehat{Q}^{\prime}_n(c_nz)}\frac{\widehat{Q}_n^{\prime}(c_nz)}{\widehat{Q}_n(c_nz)}+\left(\frac{1+\alpha-c_nz}{\lambda_n}\right)\frac{\widehat{Q}_n^{\prime}(c_nz)}{\widehat{Q}_n(c_nz)},
\end{eqnarray}
and for the Hermite case
\begin{eqnarray}
\label{strongH}\frac{P_n(c_nz)}{\widehat{Q}_n(c_nz)}&=&\frac{1}{2}\frac{1}{\lambda_n}\frac{\widehat{Q}_n^{\prime\prime}(c_nz)}{\widehat{Q}^{\prime}_n(c_nz)}\frac{\widehat{Q}_n^{\prime}(c_nz)}{\widehat{Q}_n(c_nz)}-\left(\frac{c_nz}{\lambda_n}\right)\frac{\widehat{Q}_n^{\prime}(c_nz)}{\widehat{Q}_n(c_nz)}.
\end{eqnarray}

Taking limits in \eqref{strongL} and  \eqref{strongH} we  obtain  \eqref{StrQ}. Relation \eqref{nrootcon} follows from  \eqref{Pnrootc} and \eqref{StrQ}.
\end{proof}

\section{The polynomial $\mathfrak{Q}_{n}$}\label{withouthat}

Some basic properties of the zeros of  $\mathfrak{Q}_n$  follow directly from \eqref{Lag_DO} and \eqref{Her_DO}. For example, the multiplicity of the zeros of $\mathfrak{Q}_n$ is at most $3$, a zero of multiplicity $3$ is also a zero of $\mathfrak{P}_n$ and a zero of multiplicity $2$ is  a critical point of $\widehat{\mathfrak{Q}}_n$. In the next lemma  we prove conditions for the  boundedness of the zeros of $\mathfrak{Q}_n$ and determine their  asymptotic behavior.

\begin{lemma} \label{Th5LH} Let  $\mu \in \mathcal{P}_m(\Delta)$, where $m \in \NN$ and define for $z\in\CC$, $\dsty\mathfrak{D}(z)=\sup_{x\in \Delta_c}|z-x|$. If $\{\zeta_n\}_{n=m+1}^{\infty}$ is a sequence of complex numbers  with limit  $\zeta \in \CC$, then for every  $\;d>1$ there is a positive number $N_d$, such that $\{z \in \CC : \mathfrak{Q}_n(z)=0 \} \subset \{z \in \CC : |z| \leq \mathfrak{D}(\zeta)+d\}$ whenever $n >N_d$.

\end{lemma}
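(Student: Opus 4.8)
The plan is to use the elementary representation $\mathfrak{Q}_n=\widehat{\mathfrak{Q}}_n-\widehat{\mathfrak{Q}}_n(\zeta_n)$ from \eqref{(13)LHnor}, so that $z$ is a zero of $\mathfrak{Q}_n$ precisely when $\widehat{\mathfrak{Q}}_n(z)=\widehat{\mathfrak{Q}}_n(\zeta_n)$. It therefore suffices to show that, for $n$ large, $|\widehat{\mathfrak{Q}}_n(z)|>|\widehat{\mathfrak{Q}}_n(\zeta_n)|$ whenever $|z|\ge \mathfrak{D}(\zeta)+d$, since this forces $\mathfrak{Q}_n(z)\neq 0$ there.

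First I would localize the zeros of $\widehat{\mathfrak{Q}}_n$. Fix $\delta'>0$ (to be constrained below) and set $\Delta_c^{\delta'}=\{w:d_c(w)\le \delta'\}$, with $d_c$ as in Lemma \ref{loc}. I claim that for all sufficiently large $n$ every zero of $\widehat{\mathfrak{Q}}_n$ lies in $\Delta_c^{\delta'}$. Indeed, were this false there would be a subsequence carrying zeros $w_{n_j}$ with $d_c(w_{n_j})\ge \delta'$; by Lemma \ref{loc} these are uniformly bounded, hence (Bolzano--Weierstrass) have a limit point $w^{*}$ with $d_c(w^{*})\ge \delta'$, so $w^{*}\notin \Delta_c$, contradicting part b) of Theorem \ref{countingmeasureconv}, which states that the accumulation points of the zeros of $\{\widehat{\mathfrak{Q}}_n\}$ are exactly $\Delta_c$. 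Boundedness from Lemma \ref{loc} is essential here to pass from an accumulation-set statement to the uniform-in-$n$ confinement of all zeros.

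Next comes the two-sided product estimate. Writing $\widehat{\mathfrak{Q}}_n(z)=\prod_{k=1}^{n}(z-w_{k,n})$ and using $\max_{x\in\Delta_c}|x|=1$, every $w_{k,n}\in\Delta_c^{\delta'}$ satisfies $|w_{k,n}|\le 1+\delta'$ and, taking the nearest point $x\in\Delta_c$, $|\zeta_n-w_{k,n}|\le \mathfrak{D}(\zeta_n)+\delta'$. Hence, for $|z|\ge \mathfrak{D}(\zeta)+d$,
$$
|\widehat{\mathfrak{Q}}_n(z)|\ge \bigl(\mathfrak{D}(\zeta)+d-1-\delta'\bigr)^{n},\qquad |\widehat{\mathfrak{Q}}_n(\zeta_n)|\le \bigl(\mathfrak{D}(\zeta_n)+\delta'\bigr)^{n}.
$$
Since $z\mapsto \mathfrak{D}(z)=\sup_{x\in\Delta_c}|z-x|$ is a supremum of $1$-Lipschitz functions, it is $1$-Lipschitz, so $\zeta_n\to\zeta$ gives $\mathfrak{D}(\zeta_n)\le \mathfrak{D}(\zeta)+\delta'$ for $n$ large, whence $|\widehat{\mathfrak{Q}}_n(\zeta_n)|\le (\mathfrak{D}(\zeta)+2\delta')^{n}$. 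Choosing $\delta'<(d-1)/3$ (legitimate because $d>1$) makes $\mathfrak{D}(\zeta)+d-1-\delta'>\mathfrak{D}(\zeta)+2\delta'>0$, so $|\widehat{\mathfrak{Q}}_n(z)|>|\widehat{\mathfrak{Q}}_n(\zeta_n)|$ on $\{|z|\ge \mathfrak{D}(\zeta)+d\}$. Thus $\mathfrak{Q}_n(z)\neq 0$ there, and taking $N_d$ to be the larger of the two thresholds produced by the ``$n$ large'' requirements above yields the assertion.

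I expect the localization step to be the only genuinely delicate point: one must convert the qualitative statement ``the accumulation set of the zeros is $\Delta_c$'' into the uniform conclusion ``for $n$ large \emph{all} zeros lie in $\Delta_c^{\delta'}$,'' which is exactly where the uniform boundedness from Lemma \ref{loc} is used. A second subtlety is that here $\zeta$ is allowed to lie in $\Delta_c$, so the strong asymptotics \eqref{nrootcon} cannot be invoked at $\zeta_n$; the crude bound $|\widehat{\mathfrak{Q}}_n(\zeta_n)|\le(\mathfrak{D}(\zeta_n)+\delta')^{n}$ deliberately avoids this, and everything else reduces to the routine product comparison above.
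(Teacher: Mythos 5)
Your proof is correct, but it takes a genuinely different route from the paper. The paper's own argument is geometric: from $\widehat{\mathfrak{Q}}_n(z)=\widehat{\mathfrak{Q}}_n(\zeta_n)$ it invokes the Gauss--Lucas theorem (critical points of $\widehat{\mathfrak{Q}}_n$ lie in the convex hull of its zeros, which by Theorem \ref{countingmeasureconv}~b) cluster on $\Delta_c$) and then Sheil--Small's \emph{bisector theorem}: the perpendicular bisector of the segment $[z,\zeta_n]$ must meet that convex hull, which forces $|z|\le \mathfrak{D}(\zeta_n)+1$ up to the neighborhood slack, whence the bound with any $d>1$. You instead run an elementary two-sided modulus comparison on the factored polynomial: once all zeros $w_{k,n}$ are confined to $\Delta_c^{\delta'}$, the triangle inequality gives $|\widehat{\mathfrak{Q}}_n(z)|\ge(\mathfrak{D}(\zeta)+d-1-\delta')^n$ off the disk and $|\widehat{\mathfrak{Q}}_n(\zeta_n)|\le(\mathfrak{D}(\zeta)+2\delta')^n$, and the choice $\delta'<(d-1)/3$ forces $\mathfrak{Q}_n\ne 0$ outside $\{|z|\le\mathfrak{D}(\zeta)+d\}$. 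What the paper's route buys is brevity, at the cost of citing a nontrivial black box from complex polynomial theory; what yours buys is self-containedness (only the factorization and the triangle inequality) and, importantly, an explicit treatment of the localization step --- upgrading the accumulation-set statement of Theorem \ref{countingmeasureconv}~b) to the uniform confinement of \emph{all} zeros of $\widehat{\mathfrak{Q}}_n$ in $\Delta_c^{\delta'}$ for large $n$, via the boundedness supplied by Lemma \ref{loc} --- a step the paper's proof also needs (its convex hulls must shrink to a neighborhood of $\Delta_c$) but passes over in silence. Both proofs ultimately rest on the same two inputs, Lemma \ref{loc} and Theorem \ref{countingmeasureconv}~b), and both correctly explain why the statement requires $d>1$ rather than $d>0$.
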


\begin{proof}\
As  $\mathfrak{Q}_n(z)=0$ then  $\widehat{\mathfrak{Q}}_n(z)=\widehat{\mathfrak{Q}}_n(\zeta_n)$. From Gauss--Lucas theorem (cf. \cite[\S 2.1.3]{She02}), it is known that the critical points of $\widehat{\mathfrak{Q}}_n $ are in the convex hull of its zeros and from $b)$ of Theorem \ref{countingmeasureconv}  the zeros of the polynomials $\{\widehat{\mathfrak{Q}}_n\}_{n=m+1}^{\infty}$  accumulate  on $\Delta_c$. Hence, from the \emph{bisector theorem} (see  \cite[\S 5.5.7]{She02} )  $|z| \leq \mathfrak{D}(\zeta_n)+1$ and the lemma is established.
\end{proof}

We are now ready to prove  Theorem \ref{zeroloc}.

\begin{proof}\emph{[Theorem \ref{zeroloc}]} From Lemma \ref{Th5LH}  we have that the zeros of $\mathfrak{Q}_{n}$ are located in a compact set. From \eqref{(13)LHnor}  the  zeros of $\mathfrak{Q}_n$  satisfy
the equation
\begin{equation}\label{RaizCerosLH}
 \left|\widehat{\mathfrak{Q}}_{n}(z)\right|^{\frac{1}{n}}
 = \left|\widehat{\mathfrak{Q}}_{n}(\zeta_n)\right|^{\frac{1}{n}}\,.
\end{equation}

If $z \in \CC \setminus \Delta_c$, taking limit when  $n \rightarrow \infty$, from Lemma \ref{Th5LH}, and using  \eqref{nrootcon} of Theorem \ref{Th2} on  both sides of  \eqref{RaizCerosLH}, we have that the  zeros of the sequence of polynomials $\{\mathfrak{Q}_n\}_{n=m+1}^{\infty}$    cannot accumulate outside the set

$$ \{z\in \CC:  \Psi(z)=\Psi(\zeta)\}\, \bigcup \, \Delta_c.$$

To verify the second statement of the theorem, note that if $z$ is a zero of $\mathfrak{Q}_n$, from  \eqref{(13)LHnor} we get
\begin{equation}\label{Cero_condLH}
\prod_{k=1}^{n} \left|\frac{z-\widehat{x}_{n,k}}{\zeta_n-\widehat{x}_{n,k}} \right| = 1,\; \mbox{where $\widehat{x}_{n,k}$ are the zeros of $\widehat{\mathfrak{Q}}_n$.}
\end{equation}

Let $\dsty \mathcal{V}_{\varepsilon}(\Delta_c)$ be the $\varepsilon$-neighborhood  of $\Delta_c$ defined as $\dsty \mathcal{V}_{\varepsilon}(\Delta_c)= \{ z \in \CC : \mathfrak{d}(z) < \epsilon  \}$,  as $\dsty \lim_{n \to \infty} \zeta_n=\zeta$, then for all $\varepsilon >0$ there is a $N_{\varepsilon}>0$ such that $|\mathfrak{d}(\zeta_n)-\mathfrak{d}(\zeta)|< \varepsilon $ whenever $n > N_{\varepsilon}$.

If  $\mathfrak{d}(\zeta)>2$, let us choose $\dsty \varepsilon = \varepsilon_{\zeta} = \frac{1}{2} \, \left(\mathfrak{d}(\zeta)-2 \right)$ and suppose that  there is a $z_0 \in \mathcal{V}_{\varepsilon_{\zeta}}(\Delta_c)$  such that $\mathfrak{Q}_n(z_0)=0$ for some $n > N_{\varepsilon_{\zeta}}$. Hence
\begin{equation}\label{Cero_cond}
\prod_{k=1}^{n}\left|\frac{z_0-\widehat{x}_{n,k}}{\zeta_n-\widehat{x}_{n,k}} \right| < \left( \frac{2+\varepsilon_{\zeta}}{\mathfrak{d}(\zeta_n)} \right)^{n}< 1,
\end{equation}
which is a contradiction with \eqref{Cero_condLH}, hence $\{z \in \CC : \mathfrak{Q}_n(z)=0 \} \bigcap \mathcal{V}_{\varepsilon_n}(\Delta_c)= \varnothing$ for all $n >N_{\varepsilon_{\zeta}}$, i.e.  the zeros of $\mathfrak{Q}_n$  cannot accumulate on $\dsty \mathcal{V}_{\varepsilon_{\zeta}}(\Delta_c).$

From \eqref{(13)LHnor} it is straightforward that a multiple zero of $\mathfrak{Q}_n$ is also a critical point of $\widehat{\mathfrak{Q}}_n$. But, from b) of Theorem \ref{countingmeasureconv}  and the Gauss--Lucas theorem  the set of accumulation points of $\widehat{\mathfrak{Q}}_n$ is $\Delta_c$, where we have that for $n$ sufficiently large  the zeros of ${\mathfrak{Q}}_n$ are simple.
\end{proof}

\begin{proof} \emph{[Theorem \ref{RelativeAsymptoticLH}]}

\noindent 1.- Let us prove first that
\begin{equation}\label{AsintComp}
    \frac{\mathfrak{Q}_{n}(z)}{\widehat{\mathfrak{Q}}_n (z)} =1  - \frac{\widehat{\mathfrak{Q}}_n (\zeta_n)}{\widehat{\mathfrak{Q}}_n (z)} \unifn  1,
\end{equation}
uniformly on compact subsets $K$ of the set $\{z \in \CC : |\Psi(z)| > |\Psi(\zeta)| \}$.
In order to prove  \eqref{AsintComp} it is sufficient to show that
\begin{equation}\label{SufAsint}
\frac{\widehat{\mathfrak{Q}}_n (\zeta_n)}{\widehat{\mathfrak{Q}}_n (z)} \unifn 0,
\end{equation}
uniformly on $K$.

From  \cite{GeAssch90} and  Lemmas \ref{LPright}, \ref{LPleft}, we have that for all $n$ large enough it is possible to find constants $c^*,c$ such that
\begin{equation}\label{SemStrongLH}
 c^* \leq \left|\frac{\mathfrak{P}_n(z)}{\Psi^n(z)}\right| \leq  c,
\end{equation}
uniformly on  compact subsets  of  $\CC \setminus \Delta_c$.Then we have
$$
 \left|\frac{\widehat{\mathfrak{Q}}_n (\zeta_n)}{\widehat{\mathfrak{Q}}_n (z)}\right|= \left|\frac{\widehat{\mathfrak{Q}}_n(\zeta_n)}{\mathfrak{P}_n(\zeta_n)}\right|
\left|\frac{\mathfrak{P}_n(z)}{\widehat{\mathfrak{Q}}_n (z)}\right|\left|\frac{\mathfrak{P}_n(\zeta_n)}{\Psi^n(\zeta_n)}\right| \left|\frac{\Psi^n(z)}{\,\mathfrak{P}_n(z)}\right| \left|\left(\frac{\Psi(\zeta_n)}{\Psi(z)}\right)\right|^n.
$$

From \eqref{StrQ} of Theorem \ref{Th2} and \eqref{SemStrongLH} the first four  factors on the right hand side of the previous formula  are bounded; meanwhile, the last factor tends to $0$  when $n \to \infty$, and we get \eqref{SufAsint}. Finally, the assertion 1 is straightforward from \eqref{StrQ} of Theorem \ref{Th2}.

\medskip \noindent 2.- For the assertion 2 of the theorem it is sufficient to prove that
 \begin{equation}\label{AsintComp-2}
    \frac{\mathfrak{Q}_{n}(z)}{\widehat{\mathfrak{Q}}_n (\zeta_n)} =\frac{\widehat{\mathfrak{Q}}_n (z)}{\widehat{\mathfrak{Q}}_n (\zeta_n)}  - 1 \unifn  -1,
\end{equation}
uniformly on compact subsets $K$ of the set $\{z \in \CC : |\Psi(z)| < |\Psi(\zeta)| \}\setminus \Delta_c$. Note that
$$\frac{\widehat{\mathfrak{Q}}_n (z)}{\widehat{\mathfrak{Q}}_n (\zeta_n)} = \frac{\widehat{\mathfrak{Q}}_n (z)}{\mathfrak{P}_n(z)} \frac{\mathfrak{P}_n(\zeta_n)}{\widehat{\mathfrak{Q}}_n(\zeta_n)} \frac{\mathfrak{P}_n(z)} {\Psi^n(z)}
\frac{\Psi^n(\zeta_n)}{\mathfrak{P}_n(\zeta_n)}
 \left(\frac{\Psi(z)}{\Psi(\zeta_n)}\right)^n.
$$
Now,  the first part of the assertion 2 is straightforward from \eqref{StrQ} of Theorem \ref{Th2} and \eqref{SemStrongLH}.

If $\dsty \mathfrak{d}(\zeta)>2$,  let $\dsty \mathcal{V}_{\varepsilon}(\Delta_c)= \{ z \in \CC : \mathfrak{d}(z) < \epsilon  \}$ be a $\varepsilon$--neighborhood  of $\Delta_c$, where  $\dsty \varepsilon = \varepsilon_{\zeta} = \frac{\mathfrak{d}(\zeta)}{2}-1$. By the same reasoning used to deduce \eqref{Cero_cond} we get that
\begin{equation}\label{Normal1}
\left|\frac{\widehat{\mathfrak{Q}}_n (z)}{\widehat{\mathfrak{Q}}_n (\zeta_n)}\right| < \kappa^{n}, \quad \mbox{for all } z \in \mathcal{V}_{\varepsilon}(\Delta_c), \kappa <1.
\end{equation}
Hence from  the first part of the assertion 2 and  \eqref{Normal1} we get the second part of the assertion 2.
\end{proof}

\section{A fluid dynamics model}
\label{Sec_FluidLH}

In this section we show a hydrodynamical model for the zeros of the orthogonal polynomials  with respect to the pair $(\LL,\mu)$. In \cite{BorPij12}, we gave a hydrodynamic interpretation for the critical points  of orthogonal polynomials with respect to a  Jacobi differential operator.

Let us consider a flow of an incompressible fluid in the complex plane, due to a system of  $n$  \emph{different} points ($n>1$) fixed at  $w_i$, $ 1 \leq i \leq n$. At each point $w_i$ of the system there is defined a complex potential $\mathcal{V}_{i}$, which for the Laguerre case equals to the sum of a \emph{source(sink)} with a fixed strength $\Re[c_i]$ plus a \emph{vortex} with a fixed strength $\Im[c_i]$ plus a \emph{uniform stream}  $U_i$ at infinity. Here $c_i$ and $d_i$ are fixed complex numbers which depend on the position of the remaining points $\{w_i\}_{i=1}^{n}$, see \cite[Ch. VIII]{MTho98} for the terminology. We shall call \emph{$n$ system} to the set of the  $n$  points fixed at  $w_i$ with its respective potential of velocities.

Define the functions
$$\dsty f_i(w_1,\ldots, w_n)=\frac{R_n^{\prime \prime}(w_i)}{\dsty R_n^{\prime}(w_i)},\quad i=1,\dots ,n \quad \mbox{where} \quad  R_n(z)=\prod_{i=1}^{n}(z-w_i).$$

The complex potentials $\mathcal{V}_{L}$ (Laguerre case) or $\mathcal{V}_{H}$ (Hermite case) at any point $z$ (see \cite[Ch. 10]{Dur08}),  by the principle of superposition  of solutions, are given by
\begin{equation}
\mathcal{V}_{L}(z) =  \sum_{i=1}^{n}\mathcal{V}_{L,i} = \sum_{i=1}^{n} \left(-z + (1+\alpha-w_i) \log{(z-w_i)}  + (z+w_i \log{(z-w_i)} ) f_i(w_1,\ldots, w_n)\right), \label{(14)LHQn}
\end{equation}
and
\begin{equation}
\label{(14')HQ}\mathcal{V}_{H}(z)= {\sum_{i=1}^{n}\mathcal{V}_{H,i}} = \sum_{i=1}^{n}
\left(-z+ \frac{1}{2}(f_i(w_1,\ldots, w_n)-2w_i)\log{(z-w_i)}\right).
\end{equation}

From a complex potential $\mathcal{V}$, a complex velocity $\mathbf{V}$ can be derived by differentiation ($\mathbf{V}(z)=\frac{d\mathcal{V}}{dz}$). A standard problem associated with the complex velocity is to find the zeros, that correspond to the set of \emph{stagnation points}, i.e. points where the fluid has zero velocity.

We are interested in the problem: Build an $n$ system (location of the points $w_1, \ldots, w_{n}$) such that the stagnation points are at preassigned points with \emph{nice} properties. As it is well known, the zeros of the orthogonal polynomials with respect to a finite positive Borel measures on $\RR$ have a rich set of \emph{nice} properties (cf. in \cite[Ch. VI]{Szg75}),  and will  be taken as preassigned stagnation points. Here we consider $\mu\in \mathcal{P}_1[\RR_+]$  or $\mu\in \mathcal{\widetilde{P}}_2[\RR]$. In the next paragraph we establish the statement of the problem for both Laguerre and Hermite cases.

\begin{quote}
\noindent\textbf{Problem.} \emph{ Let $\{x_1, \ldots, x_n\}$ be the set of zeros of the $n$th orthogonal polynomial $P_n$  ($n>1$ for the Laguerre case and $n>2$ for the Hermite) with respect to a  measure $\mu\in \mathcal{P}_1[\RR_+]$  or $\mu\in \mathcal{\widetilde{P}}_2[\RR]$. Suppose   a flow is given, with complex potential $\mathcal{V}_{L}$ (Laguerre case) or $\mathcal{V}_{H}$ (Hermite). Build an $n$ system (location of the points $w_1, \ldots, w_{n}$) such that the stagnation points  are attained at the points $z=x_i$, with $i=1,2,\ldots,n$.}
\end{quote}

Consider first the Laguerre case. If   $x_{k}$ ($k=1,\ldots, n.$) are  stagnation points then
\begin{equation}\label{(15)LH}
\frac{\partial \mathcal{V}_{L}}{\partial z}(x_k)= (1+\alpha-x_k){\sum_{i=1}^{n} \frac{1}{x_k-w_i}}+
 x_k\sum_{i=1}^{n}\frac{R_n^{\prime \prime}(w_i)}{R_n^{\prime}(w_i)(x_k-w_i)}= 0.
\end{equation}

We are looking for a solution $\dsty R_n(z)=\prod_{i=1}^{n}(z-w_i)$, with $w_i\neq w_j\neq x_k, \forall i,j,k$, $i\neq j$, such that \eqref{(15)LH} holds. This assumption  implies that the sum in the second term of the left hand side of expression \eqref{(15)LH} is the partial-fraction decomposition of $\dsty \frac{R_n^{\prime \prime}}{R_n}$ evaluated at $x=x_k$. Therefore,  \eqref{(15)LH} is equivalent to
$$  x_k R_n^{\prime \prime}(x_k)+(1+\alpha-x_k)R_n^{\prime}(x_k)= 0, \quad
k=1,\,2,\, \dots, n.$$

Note that $xR^{\prime\prime}_n(x)+ (1+\alpha-x)R^{\prime}_n(x)$ is a polynomial of degree $n$, with leading coefficient $\lambda_n$    that vanishes at the zeros of $P_n$, i.e.
\begin{equation}\label{(16L)}
 xR^{\prime\prime}_n(x)+   (1+\alpha-x)R^{\prime}_n(x)= \lambda_n P_n(x).
\end{equation}

Observe  that  expression \eqref{(16L)} is equivalent to \eqref{OrthDiff_02}. From Proposition \ref{Zero_Loc02}, the zeros of $\widehat{Q}_n, \widehat{Q}^{\prime}_n$ are real, simple and $Q^{\prime}_n(x_k)\neq 0$. Therefore, $R_n =\widehat{Q}_n $ is a solution. Hence, an answer  to our problem yields the $n$ points as the $n$ zeros of the polynomial $\widehat{Q}_n $.

For the Hermite case we have a similar situation. Thus, if  $x_{k}$ is a stagnation point,  $ \dsty \frac{\partial \mathcal{V}_{H}}{\partial z}(x_k)=0$,  which gives
\begin{equation}\label{(15')Q}
x_k\sum_{i=1}^{n} {\frac{1}{x_k-w_i}}-\frac{1}{2} \sum_{i=1}^{n}\frac{R_n^{\prime \prime}(w_i)}{R_n^{\prime}(w_i)(x_k-w_i)}= 0, \quad
k=1,\,2,\, \ldots, n.
\end{equation}

Again,  we can deduce that the expression \eqref{(15')Q} equals to $\dsty \frac{1}{2}R_n^{\prime \prime}(x_k)-x_kR_n^{\prime}(x_k) = 0,$ for $k=1, \ldots, n$.

Note that $\frac{1}{2}R^{\prime\prime}_n(x)-xR^{\prime}_n(x)$ is a polynomial of degree $n$, with leading coefficient $\lambda_n$ that   vanishes at the zeros of $P_n$, i.e.
\begin{equation}\label{(16H)}
 \frac{1}{2}R^{\prime\prime}_n(x)-xR^{\prime}_n(x)= \lambda_n P_n(x).
\end{equation}

Therefore, the expression \eqref{(16H)} is equivalent to \eqref{OrthDiff_02}. From Proposition \ref{Zero_Loc02}, the zeros of $\widehat{Q}_n, \widehat{Q}^{\prime}_n$ are real and simple and $Q^{\prime}_n(x_k)\neq 0$, which implies that $R_n =\widehat{Q}_n $ is a solution to our problem. As a conclusion,
\begin{quote}
\noindent\textbf{Answer.} \emph{The flow of an incompressible two--dimensional fluid, due to  $n$  points with complex potential $\mathcal{V}_{L}$ or $ \mathcal{V}_{H} $, located at  the zeros of the $n$th orthogonal polynomial $\widehat{Q}_n$ with respect to  $(\LL, \mu)$, with  $\mu \in \mathcal{P}_1[\RR_+]$  or $\mu\in \mathcal{\widetilde{P}}_2[\RR]$  has its  $n$ stagnation points at the $n$ zeros of the $n$th orthogonal polynomial $\widehat{Q}_n$.}
\end{quote}

It would be interesting to consider the uniqueness of the solution obtained. In other words, what could be said about the solutions of the form  $Q_n(z)= \widehat{Q}_n(z) - \widehat{Q}_n(\zeta_n)$ and to extend this model to more general classes of measures $\mu$. It would be also of interest to decide if these stagnation or equilibrium  points are stable.

\begin{ack*}
The authors thank the comments and suggestions made by  the referees which helped improve the manuscript.
\end{ack*}

%%%%%%%%%%%%%%%%%%%%%%%%%%%%%%%%%%%%%%%%%%%%%%%%%%%%%%%%%%%%%%%%%%%%%%%%%%%%%%%%%%%%%%%%%%%%%%%%%%%%%%%%%%%%%%%%%%%%%%%%%%%%%%%%%%%%%%%%%%%%%%%%

\end{document}